\documentclass{article}
\usepackage{amsmath,amssymb,amsthm,mathtools,graphicx,epsfig,float,url}
\usepackage{mathrsfs}  
\usepackage{dsfont}
\usepackage{esint}
\usepackage{braket}
\usepackage[dvipsnames]{xcolor}
\usepackage[colorlinks=true,citecolor={Plum},linkcolor={Periwinkle}]{hyperref}
\usepackage{cleveref}
\usepackage[a4paper,
            left=2cm,
            right=2cm,
            top=2cm,
            bottom=2cm]{geometry}

\usepackage{biblatex}
\newcommand{\R}{\textnormal{I\kern-0.21emR}}
\newcommand{\N}{\textnormal{I\kern-0.21emN}}

\def\Om{{\Omega}}
\allowdisplaybreaks

\newtheorem{material}{material}
\newtheorem{theorem}[material]{Theorem}
\newtheorem{proposition}[material]{Proposition}

\newtheorem{corollary}[material]{Corollary}
\newtheorem{definition}[material]{Definition}
\newtheorem{lemma}[material]{Lemma}

\newtheorem{remark}[material]{Remark}

\def\B{{\mathbb B}}
\def\O{{\Omega}}
\def\n{{\nabla}}

\def\sh{{\sharp}}

\def\e{{\varepsilon}}
\usepackage{xpatch}
\makeatletter
\newcounter{proofpart}
\xpretocmd{\proof}{\setcounter{proofpart}{0}}{}{}
\newcommand{\proofpart}[1]{%
  \par 
  \addvspace{\medskipamount}%
  \stepcounter{proofpart}%
  \noindent\emph{Step \theproofpart: #1}\par\nobreak\smallskip
  \@afterheading
}
\makeatother %

\newcommand{\al}{\alpha}
\newcommand{\be}{\beta}

\newcommand{\na}{\nabla}

\newcommand{\si}{\sigma}

\DeclarePairedDelimiter{\abs}{\lvert}{\rvert}

\numberwithin{equation}{section}

\begin{document}
\title{Sharp stability for the Talenti inequality}

\author{Paolo Acampora\footnote{Dipartimento di Bioscienze e Territorio, Università degli Studi del Molise, Viale dell’università 3,
86090, Pesche (IS) (\texttt{paolo.acampora@unimol.it})}, Jimmy Lamboley\footnote{École Normale Supérieure, CNRS, PSL University, DMA and Sorbonne Université, Université Paris Cité, IMJ-
PRG, F-75005 Paris, France (\texttt{jimmy.lamboley@ens.fr})}, Idriss Mazari-Fouquer\footnote{CEREMADE, UMR CNRS 7534, Universit\'e Paris-Dauphine, Universit\'e PSL, Place du Mar\'echal De Lattre De Tassigny, 75775 Paris
cedex 16, France (\texttt{mazari@ceremade.dauphine.fr})}}
\def\P{{\Phi}}

\maketitle
\begin{abstract}
We prove a sharp quantitative version of the Talenti rearrangement inequality. Specifically, we show that, for any $p\in [1;+\infty]$, for any $f\in L^\infty(\O),\, f\geq 0,\, f\not\equiv 0$, for any  open set $\O\subset \R^d$ with finite volume, the solution $u_{f,\O}$ to $-\Delta u_{f,\O}=f,\, u_{f,\O} \in W^{1,2}_0(\O)$, satisfies, in dimension $d\geq 1$,  $\Vert u_{f^\sharp,\O^\sharp}\Vert_{L^p(\O^\sharp)}-\Vert u_{f,\O}\Vert_{L^p(\O)}\gtrsim \alpha(\O)^2+\inf_{z\in \R^d}\mathscr A(f,z)^2$ and, in dimension 1, $ \Vert u_{f^\sharp,\O^\sharp}\Vert_{L^p(\O^\sharp)}-\Vert u_{f,\O}\Vert_{L^p(\O)}\gtrsim \alpha(\O)+\inf_{z\in \R}\mathscr A(f,z)^2$ where $\alpha(\O)$ denotes the Fraenkel asymmetry of $\O$,  $\mathscr A(f,z)$ a weighted asymmetry of $f$ with respect to some center $z$ and $\O^\sharp$ (resp. $f^\sharp$) is the Schwarz rearrangement of $\O$ (resp. of $f$). In dimension $d\geq 2$, this estimate implies $ \Vert u_{f^\sharp,\O^\sharp}\Vert_{L^p(\O^\sharp)}-\Vert u_{f,\O}\Vert_{L^p(\O)}\gtrsim \alpha(\O)^2+\inf_{z\in \R^d}\Vert f-f^\sh(\cdot-z)\Vert_{L^1}^2.$ Our result sharpens and extends \cite{zbMATH08185219,arXiv:2311.18617} and, unlike  \cite{zbMATH08185219}, does not rely on shape differentiation arguments but on a generalisation of the quantitative Saint-Venant inequality.
\end{abstract}

\textbf{Keywords: } rearrangement inequalities, quantitative estimates, Schwarz symmetrisation.

\textbf{Acknowledgment: } J. L. and I. M-F. are partially supported by the ANR STOIQUES project. Parts of this work were carried out while I. M-F. was supported by a PSL Young Researcher Starting Grant. 
The author P. A. was partially supported by the INdAM - GNAMPA Project, 2026,
”Equazioni alle Derivate Parziali: problemi inversi e stime quantitative”, CUP\_53C25002010001.

\textbf{LLM Statement: } Ideas and proofs were developed without the use of LLMs. ChatGPT 6 was used in the last stages of writing to proofread the manuscript.

\tableofcontents
\section{Introduction}
\subsection{Scope of the paper and state of the art}
We prove a sharp version of the Talenti inequality.  Although we refer to \cite{zbMATH07035366} for a general reference, let us recall basic facts about the classical Talenti inequality: for a bounded open set $\O\subset \R^d$, let $\O^\sharp$ denote its Schwarz rearrangement, that is, the centered ball with the same volume as $\O$. For any $v\in L^2(\O)$, $v\geq 0$, let $v^\sharp$ denote its Schwarz rearrangement, that is, the unique radially symmetric, non-increasing function with the same distribution function as $v$. Finally,  for any $f\in L^\infty(\O)$, let $u_{f,\O}$ be the unique weak solution to 
\begin{equation}\label{Eq:Poisson}\begin{cases}-\Delta u_{f,\O}=f&\text{ in }\O,
\\ u_{f,\O}\in W^{1,2}_0(\O).\end{cases}\end{equation}
The Talenti inequality \cite[Theorem 1]{zbMATH03531830} asserts that if $f\in L^\infty(\O),\, f\geq 0$
\[ 0\leq u_{f,\O}^\sh\leq u_{f^\sh,\O^\sh}.\] In fact, the proof of \cite[Theorem 1]{zbMATH03531830} shows the following stronger result: the map
\[ v_{f,\O}:=u_{f^\sh,\O^\sh}-u_{f,\O}^\sh\] is radially monotone non-increasing. In particular, this entails  that for any $p\in [1;+\infty]$
\begin{equation}\label{Eq:LpTalenti}
\left\Vert u_{f,\O}\right\Vert_{L^p(\O)} \leq \left\Vert u_{f^\sharp,\O^\sharp}\right\Vert_{L^p(\O^\sharp)}.\end{equation} Furthermore, this estimate is rigid \cite[Theorem 1]{zbMATH03962241} in the sense that, if $f\not\equiv 0$,   equality holds in \eqref{Eq:LpTalenti} for some $p\in [1;+\infty]$ if, and only if, up to a translation, $\O=\O^\sh$ (up to zero capacity sets) and $f=f^\sh$ (up to null sets). 

The central object of the present article is the quantified version of this rigidity estimate. Observe that two objects are symmetrised in \eqref{Eq:LpTalenti}, the domain $\O$ and the source term $f$, so that stability must be established with respect to both. Although we defer a discussion of the literature to Section \ref{Se:Bib}, let us mention here that non-sharp stable versions of the quantitative Talenti inequality were previously established \cite{arXiv:2311.18617,K21}. When the domain $\O$ is assumed to be a ball and $f$ is restricted to being a characteristic function, a sharp quantitative version of \eqref{Eq:LpTalenti} was proved by two of the authors in the recent paper \cite{zbMATH08185219}. In the present paper, we introduce the sharp asymmetry required to handle any bounded source term, and we provide a sharp version of \eqref{Eq:LpTalenti}, sharpness being understood with respect to both $\O$ and $f$. Notably, our proof only relies on a strengthened version of the quantitative Saint-Venant inequality from Brasco, De Philippis \& Velichkov \cite{zbMATH06464861}.

\subsection{Main results}
Before we state our main results, we recall some basic notions about Schwarz rearrangement:
\begin{definition}\label{De:Schwarz}
  Let $\Omega\subset\R^d$ be a measurable set of finite measure, and $f: \Omega \to \R$ be a measurable, non-negative function. We define:
           \begin{itemize}
           \item the \emph{distribution function} $\mu_f : [0,+\infty)\, \to [0, +\infty)$ of $f$ as the function
	       $$
	           \forall t\geq 0, \qquad\mu_f(t)= \abs{\Set{x \in \Omega \, ,\,  {f(x)} >  t}}.
	       $$
           \item the \emph{decreasing rearrangement} $f^*:[0,+\infty)\to[0,+\infty]$ of $f$ as 
         \[
           \forall s\geq 0, \qquad f^*(s) = \inf\Set{t>0 \, ,\; \mu_f(t) \leq s}.
         \] 
        \item the \emph{Schwarz rearrangement} $f^\sharp:\Om^\sharp\to[0,+\infty]$ of $f$ as
            \[\forall x\in\Om^\sharp, \qquad
            f^\sharp (x)= f^*(\omega_d \abs{x}^d),
            \]
            where $\Om^\sh$ denotes the centered ball having the same volume as $\Om$ and $\omega_d=|\B(0;1)|.$
            \end{itemize} Alternatively, one can characterise the Schwarz rearrangement $f^\sharp$ as the unique (up to null sets) radially symmetric, non-increasing function $f^\sharp:\O^\sharp\to [0;+\infty]$ satisfying 
\[ \mu_f=\mu_{f^\sharp}.\]
\end{definition} 

We now give a proper definition of the two types of asymmetries we consider. 
\begin{definition}\label{De:Asymmetry}
\begin{enumerate}
\item\underline{Domain asymmetry:} we use the classical Fraenkel asymmetry defined as
\[
\alpha(\Omega)
:=
\inf\left\{
\frac{|\Omega\triangle \B(x_0;r)|}{|\Omega|},\;\;x_0\in\mathbb R^d, \;\;|\B(x_0;r)|=|\Omega|\right\}.
\]
\item \underline{Function asymmetry:} for any $f\in L^\infty(\O),\, f\geq 0$ and any $z \in \R^d$, the weighted  asymmetry of $f$ with respect to $z$ is defined as 
\[\mathscr A(f,z)^2:=\int_0^\infty \mu_f(t)^{\frac2d-1} \left|\{f> t\}\triangle \{f^\sharp(\cdot-z)> t\}\right|^2dt\] where, by convention, $f$ (resp. $f^\sharp$)  is extended by zero outside of $\O$ (resp. $\O^\sharp$).
\end{enumerate}
\end{definition}
We will discuss why $\mathscr A$ is a natural asymmetry for $f$ in Remark \ref{ReALL}.

\begin{remark}
    One should observe that $\mathscr A$ is well-defined with the following convention: as
    \[ \left|\{f> t\}\triangle \{f^\sharp(\cdot-z)> t\}\right|\leq 2\mu_{f}(t),\] we take the integrand to be zero when $\mu_f(t)=0$. Also observe that 
    \[ \mathscr A(f,z)^2\leq 4 \int_0^\infty \mu_f(t)^{1+\frac2d}dt<\infty.\] Therefore, if $f$ is bounded, $\mu_f$ is compactly supported and so $\mathscr A(f,z)$ is  a finite quantity.
\end{remark}

In this paper, we consider a function $j\in \mathscr C^{1}((0;+\infty))\cap \mathscr C^0([0;+\infty))$ satisfying 
\begin{multline}\label{Eq:Hypj}
j\text{ is convex, }j'>0\text{ on }(0;+\infty) \text{ and }\\\text{ for any $K>0$, there exists $c>0$ such that } \forall 0< x\leq K,\, j(x)-j(0)\geq c xj'(x). \end{multline} 
Note that for any $p\geq 1$ the map $j:x\mapsto x^p$ satisfies these assumptions. Our main result is the following:
\begin{theorem}\label{Th:Main} 
Let $j$ satisfy \eqref{Eq:Hypj}. Let $\O$ be an open set with finite volume, and let $f\in L^\infty(\O)$, $f\geq 0$, $f\not\equiv 0$. 
\begin{enumerate}
    \item In dimension $d\geq 2$, there exists a constant $c>0$ depending only on $j$, on the dimension, on $\Vert f^\sharp\Vert_{L^\infty(\O^\sharp)},\, \Vert f^\sharp\Vert_{L^1(\O^\sharp)}$ and on $|\O^\sharp|$ such that the following holds: 
\begin{equation}\label{Eq:Talentij}\int_{\O^\sh}j(u_{f^\sharp,\O^\sharp})-\int_\O j(u_{f,\O})\geq c\left(\alpha(\O)^2+\inf_{z\in \R^d}\mathscr A (f,z)^2\right).\end{equation}
    \item In dimension $d=1$, there exists a constant $c>0$ depending only on $j$, on $\Vert f^\sharp\Vert_{L^\infty(\O^\sharp)},\, \Vert f^\sharp\Vert_{L^1(\O^\sharp)}$ and on $|\O^\sharp|$ such that the following holds: 
\begin{equation}\label{Eq:Talentij1d}\int_{\O^\sh}j(u_{f^\sharp,\O^\sharp})-\int_\O j(u_{f,\O})\geq c\left(\alpha(\O)+\inf_{z\in \R^d}\mathscr A (f,z)^2\right).\end{equation}
\end{enumerate}
\end{theorem}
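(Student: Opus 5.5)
We propose to follow Talenti's original argument along the level sets of $u:=u_{f,\O}$ and to keep track of every deficit it discards. For a.e. $t$ set $\O_t=\{u>t\}$ and $\mu(t)=|\O_t|$. Talenti's proof rests on three facts: the coarea formula; the isoperimetric inequality $P(\O_t)\geq d\omega_d^{1/d}\mu(t)^{1-1/d}$; and the Hardy--Littlewood bound $\int_{\O_t}f\leq\int_0^{\mu(t)}f^*$. Combined with Cauchy--Schwarz, they give the differential inequality
\[
d^2\omega_d^{2/d}\mu(t)^{2-2/d}\leq (-\mu'(t))\int_0^{\mu(t)}f^*,
\]
which is an equality for the radial problem. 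Integrating this inequality yields $u^*\leq u_{f^\sh,\O^\sh}^*$ and the monotonicity of $v_{f,\O}$.

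The plan is to carry the exact deficits through this computation. For a.e. $t$ we write
\[
P(\O_t)^2=\Big(\int_{\partial\O_t}|\n u|^{-1}\Big)\Big(\int_{\partial\O_t}|\n u|\Big)-\delta_{CS}(t),
\qquad
\int_0^{\mu(t)}f^*-\int_{\O_t}f=\delta_{HL}(t),
\]
and $P(\O_t)=d\omega_d^{1/d}\mu(t)^{1-1/d}(1+\delta_{iso}(t))$. We then integrate the resulting ODE for $\mu$ against $j'$, using the layer-cake formula $\int j(u)=j(0)|\O|+\int_0^\infty j'(t)\mu(t)\,dt$. This should give a lower bound of the form
\[
\int_{\O^\sh}j(u_{f^\sh,\O^\sh})-\int_\O j(u)\gtrsim\int_0^{\max u}w(t)\big(\delta_{iso}(t)+\delta_{HL}(t)\big)\,dt,
\]
with an explicit weight $w$. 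Hypothesis \eqref{Eq:Hypj} is what keeps $w$ under control near $t=0$.

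Next we bound each deficit by an asymmetry.

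\emph{Source term.} A bathtub argument should give $\delta_{HL}(t)\gtrsim\int_0^\infty|\{f>s\}\setminus\O_t|\,ds$. Comparing $\{f>s\}$ with the level sets $\O_t$, and these in turn with balls by the quantitative isoperimetric inequality $\delta_{iso}(t)\gtrsim\alpha(\O_t)^2$, we aim to recover $\inf_z\mathscr A(f,z)^2$. The weight $\mu_f(t)^{2/d-1}$ in $\mathscr A$ should come from the scaling of $w$.

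\emph{Domain.} We would replace the pointwise level-set analysis by the strengthened quantitative Saint-Venant inequality announced in the introduction, in the style of Brasco--De Philippis--Velichkov. Concretely, we aim for
\[
\int j(u_{f^\sh,\O^\sh})-\int j(u_{f^\sh,\O})\gtrsim\alpha(\O)^2
\]
for radial decreasing $f^\sh$ placed in $\O$, obtained by selection and regularity via the torsion-type energy. We would then combine this with the source-term estimate, which compares $u_{f,\O}$ to $u_{f^\sh(\cdot-z),\O}$.

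\emph{Dimension one.} The deficit is linear in $\alpha$, because a disconnected $\O$ loses a full amount. An interval of length $|\O|$ minus a set of measure $m$ loses order $m$ in the integral directly, so the linear bound can be checked by hand.

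The main obstacle is the uniformity of the constants and the choice of the center. The quantitative isoperimetric inequality gives a different center $z(t)$ on each level, but $\mathscr A$ requires a single $z$. We plan to pick $z$ from one level set of moderate measure, of order $|\O|/2$, and to propagate the estimate using nestedness: for $s<t$ we have $\O_t\subset\O_s$, and the symmetric differences of nested sets with their balls of best approximation control the drift of the centers. This should cost only constants depending on $\|f^\sh\|_\infty$, $\|f^\sh\|_{L^1}$ and $|\O|$. We also expect difficulty near $t=0$ and near $\max u$, where $\mu$ degenerates; there we would use the bound $\mathscr A^2\leq4\int\mu_f^{1+2/d}$ to absorb the tail contributions.
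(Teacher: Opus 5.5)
Your proposal has a genuine gap, and it sits where you locate the ``main obstacle'': nothing in your plan fixes a common center, and nestedness cannot do it. If $\O_t\subset\O_s$ are close to $\B(z_t;r_t)$ and $\B(z_s;r_s)$, nestedness only gives $|z_t-z_s|\lesssim r_s-r_t$ plus the asymmetries. Nested exact balls $\B(z_t;r_t)\subset\B(z_s;r_s)$ with $|z_t-z_s|$ comparable to $r_s-r_t$ are geometrically admissible. For $f=\phi(u)$ with such level sets, both deficits you keep vanish: $\delta_{iso}$ is translation invariant level by level, and $\delta_{HL}$ only compares level sets of $f$ with those of $u$. Yet $\inf_z\mathscr A(f,z)>0$ as soon as two level sets of $f$ are non-concentric balls. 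In Talenti's chain, the only term that detects drifting centers is the Cauchy--Schwarz deficit $\delta_{CS}$ (non-constancy of $|\n u|$ on $\partial\O_t$). You discard it, and using it quantitatively is a PDE rigidity statement, not a geometric one.

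The paper obtains this information globally, for the maximiser $f_\O=\phi(w_\O)$ of $g\mapsto\int_\O gw_\O$. A Feldman-type identity together with $|\n w_\O^\sh|\leq|x|/d$ gives
\[
\int_\O f_\O|x|^2-\int_{\O^\sh}f^\sh|x|^2\leq d^2\int_\O f_\O\Big|\n w_\O+\frac xd\Big|^2\lesssim T(\O^\sh)-T(\O)
\]
for $\O\subset\B(0;R)$ with barycenter $0$, via the Brasco--De Philippis--Velichkov estimates. Lemou's quantitative Hardy--Littlewood inequality with weight $|x|^2$ then turns the left-hand side into $\mathscr A(f_\O,0)^2$, so the center is selected by the barycenter. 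General $f$ is compared to $f_\O$ through the strict concavity of $K(V)=\int_0^Vw_{\O^\sh}^*$ (Lemma \ref{Le:Replacement}). Unbounded $\O$ are handled by the truncation of Lemma \ref{Le:BDPV}.

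Even granting a center, your route does not reach the sharp exponent. Your lower bound is of the form $\int_0^{|\O|}r^{2/d}\alpha(\O_{t(r)})^2\,dr$ with $\mu(t(r))=r$, that is, Lebesgue measure in the level variable. By contrast, $\mathscr A^2$ is integrated against $df^*$, which may be atomic (e.g.\ $f=\mathds 1_E$). If $b$ is the distance of the single level set $\O_{t(|E|)}$ to the nearest ball, nestedness ($|\O_{t(r)}\triangle\O_{t(r')}|=|r-r'|$) keeps the distance $\geq b/2$ only for $|r-|E||\leq b/4$. You thus get $b^3\lesssim$ deficit instead of $b^2$, the kind of loss behind the non-sharp exponents of \cite{arXiv:2311.18617,K21}.

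Several intermediate claims also fail as stated:
\begin{itemize}
\item $\delta_{HL}(t)\gtrsim\int_0^\infty|\{f>s\}\setminus\O_t|\,ds$ is false. Take $f=\mathds 1_E$, $\O_t\subset E$, $|E|>\mu(t)$; then $\delta_{HL}(t)=0$. The exact identity is $\delta_{HL}(t)=\int_0^\infty\min\big(|\{f>s\}\setminus\O_t|,|\O_t\setminus\{f>s\}|\big)ds$, and quadratic control appears only after integrating over levels.
\item $u_{f^\sh,\O}$ with ``$f^\sh$ placed in $\O$'' is not an admissible competitor. No weighted Saint-Venant inequality needs reproving: Hardy--Littlewood and Chebyshev, using that $w_{\O^\sh}-w_\O^\sh$ is radially non-increasing, give $\mathscr D_1(f,\O)\geq\frac{\Vert f^\sh\Vert_{L^1}}{|\O|}\big(T(\O^\sh)-T(\O)\big)$. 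The standard quantitative Saint-Venant inequality then applies.
\item $\mathscr A^2\leq4\int\mu_f^{1+2/d}$ is an upper bound, so it cannot absorb tail terms into a lower bound for the deficit.
\end{itemize}
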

\begin{remark}\label{Re:SV1d}
    Regarding the different exponent (for the Fraenkel asymmetry of the domain) in dimension 1, an important part of the proof is showing that 
    \[ \int_{\O^\sh}j(u_{f^\sharp,\O^\sharp})-\int_\O j(u_{f,\O})\geq c \left(T(\O^\sharp)-T(\O)\right).\] We refer to Proposition \ref{Pr:FA}. We then use the quantitative Saint-Venant inequality, for which the stability exponent depends on the dimension: while, in dimension $d\geq 2$, the optimal exponent is 2, it is 1 in dimension 1. Since we could not locate the one-dimensional result in the literature, we prove it in Lemma \ref{Le:SV1d}.
\end{remark}
\begin{remark}\label{Re:AL}
In the proof of Theorem \ref{Th:Main} we actually prove the following stronger statement in dimension $d\geq 2$: for any $R>0$, for any $\O\subset \B(0;R)$, there exists a constant $c=c(R,d,j,\Vert  f^\sharp\Vert_{L^\infty(\O^\sharp)},\, \Vert f^\sharp\Vert_{L^1(\O^\sharp)})$ such that
\[\int_{\O^\sh}j(u_{f^\sharp,\O^\sharp})-\int_\O j(u_{f,\O})\geq c\left(\alpha(\O)^2+\mathscr A(f,x_\O)^2\right)\] where $x_\O=\frac1{|\O|}\int_\O x dx$ is the barycenter of $\O$. In dimension 1, we obtain the same inequality with $\alpha(\O)^2$ replaced with $\alpha(\O)$. Removing the constraint $\O\subset \B(0;R)$ leads to having to consider other possible centers $z$. For example, in the case $\O=\B(0;1)=\O^\sharp$, our proof gives 
\[ \int_{\O^\sharp} j(u_{f^\sharp,\O^\sharp})-\int_{\O^\sharp} j(u_{f,\O^\sharp})\geq c \int_0^\infty \mu_f(t)^{\frac2d-1}|\{f> t\}\triangle \{f^\sharp > t\}|^2dt.\] In Section \ref{Se:Bib} we explain how this allows us to recover the results of \cite{zbMATH08185219}.
 \end{remark}
 \begin{remark}[Regarding the optimality of the exponents and of the asymmetries]\label{ReALL}
 Consider the case $f\equiv 1$ on $\O$ and $j:x\mapsto x$. In that case, for some constant that only depends on $|\O^\sharp|$, 
 \[ \inf_{z\in \R^d} \mathscr A(f,z)^2=c\alpha(\O)^2,\] and the Talenti inequality reduces to the torsion deficit. Since the Saint-Venant inequality is known to be quadratically stable with respect to the Fraenkel asymmetry in dimension $d\geq 2$ (and this is sharp), and linearly stable with respect to the Fraenkel asymmetry in dimension 1 (likewise, this is sharp--see  Lemma \ref{Le:SV1d} below), the stability with respect to $\alpha(\O)$ is sharp in \eqref{Eq:Talentij}--\eqref{Eq:Talentij1d}.

 Regarding the optimality of both the asymmetry $\mathscr A$ and the exponent $2$  in the dependence $f$,  consider the case $\O=\O^\sharp$ and $f=\mathds 1_E$ for some $E\subset \O$. It was proved in \cite[Proof of Proposition 2.8]{zbMATH08185219} that
 \[ \int_{\O^\sh}u_{f^\sh,\O^\sh}-\int_{\O^\sh}u_{f,\O^\sh}\geq c |E|^{\frac2d-1} \cdot |E\triangle E^\sh|^2.\]Thus, $\mathscr A^2$ is an integrated version of this estimate. Although we do not claim here that $\mathscr A$ is sharp for any $f^\sharp$, we conjecture that it is, as is the exponent 2.
 \end{remark}
 \begin{remark}
     The constant $c$ in Theorem \ref{Th:Main} is not explicit as we rely heavily on estimates from \cite{zbMATH06464861} that are non-constructive. It would be interesting to know if the recent methods of \cite{arXiv:2609.07772} could give explicit constants. We leave that as an interesting open problem.
 \end{remark}
As a consequence, we get the following:
\begin{corollary}
\label{cor:pStability}
Let $p\in [1;+\infty]$. Let $\O$ be an open set with finite volume, and let $f\in L^\infty(\O)$, $f\geq 0$, $f\not\equiv 0$. 
\begin{enumerate}
    \item In dimension $d\geq 2$, there exists a constant $c>0$ depending only on $p$, on the dimension, on $\Vert f^\sharp\Vert_{L^\infty(\O^\sharp)},\, \Vert f^\sharp\Vert_{L^1(\O^\sharp)}$ and on $|\O^\sharp|$   such that 
\begin{equation}\label{Eq:Lpstability}
\Vert u_{f^\sh,\O^\sh}\Vert_{L^p(\O^\sh)}-\Vert u_{f,\O}\Vert_{L^p(\O)}\geq c \left(\alpha(\O)^2+\inf_{z\in \R^d}\mathscr A(f,z)^2\right).
\end{equation}
\item In dimension  $1$, there exists a constant $c>0$ depending only on $p$,  on $\Vert f^\sharp\Vert_{L^\infty(\O^\sharp)},\, \Vert f^\sharp\Vert_{L^1(\O^\sharp)}$ and on $|\O^\sharp|$   such that 
\begin{equation}\label{Eq:Lpstability1}
\Vert u_{f^\sh,\O^\sh}\Vert_{L^p(\O^\sh)}-\Vert u_{f,\O}\Vert_{L^p(\O)}\geq c \left(\alpha(\O)+\inf_{z\in \R^d}\mathscr A(f,z)^2\right).
\end{equation}
\end{enumerate}


\end{corollary}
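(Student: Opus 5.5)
For $p\in[1;+\infty)$, I will apply Theorem \ref{Th:Main} with $j(x)=x^p$. This $j$ satisfies \eqref{Eq:Hypj}: it is convex with $j'>0$ on $(0;+\infty)$, and $j(x)-j(0)=x^p=\frac1p\,xj'(x)$, so we may take $c=1/p$ for every $K$. Set $A:=\int_{\O^\sh}u_{f^\sh,\O^\sh}^p$ and $B:=\int_\O u_{f,\O}^p$. By \eqref{Eq:LpTalenti}, $0\le B\le A$, and Theorem \ref{Th:Main} gives $A-B\ge c\,D$, where $D$ denotes the right-hand asymmetry term ($\alpha(\O)^2+\inf_z\mathscr A(f,z)^2$, or $\alpha(\O)+\inf_z\mathscr A(f,z)^2$ when $d=1$).

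It then remains to go from a difference of $p$-th powers to a difference of norms. Since $t\mapsto t^{1/p}$ is concave, for $0\le B\le A$,
\[
A^{1/p}-B^{1/p}\ \ge\ \frac1p A^{\frac1p-1}(A-B).
\]
So we only need an upper bound on $A$ in terms of the allowed parameters. The comparison principle gives $0\le u_{f^\sh,\O^\sh}\le \Vert f^\sh\Vert_{L^\infty}\,u_{1,\O^\sh}$, and $u_{1,\O^\sh}$ is the explicit torsion function $\frac{R^2-|x|^2}{2d}$, with $R$ determined by $|\O^\sh|$. Hence $A\le C(p,d,\Vert f^\sh\Vert_{L^\infty},|\O^\sh|)$. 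This produces \eqref{Eq:Lpstability} and \eqref{Eq:Lpstability1} with constant $\frac{c}{p}C^{\frac1p-1}$. If $p=1$ the factor $A^{1/p-1}$ equals $1$, so no such bound is needed.

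The case $p=+\infty$ is the main obstacle, because $x\mapsto x^\infty$ is not an admissible $j$. I would reduce it to $p=1$ using the radial monotonicity of $v_{f,\O}=u_{f^\sh,\O^\sh}-u^\sh_{f,\O}$ recalled in the introduction. Since $u^\sh_{f,\O}$ and $u_{f^\sh,\O^\sh}$ both attain their maxima at the origin, $\Vert u_{f^\sh,\O^\sh}\Vert_\infty-\Vert u_{f,\O}\Vert_\infty=v_{f,\O}(0)\ge v_{f,\O}(x)$ for all $x$. Integrating over $\O^\sh$ gives
\[
|\O^\sh|\,\bigl(\Vert u_{f^\sh,\O^\sh}\Vert_\infty-\Vert u_{f,\O}\Vert_\infty\bigr)\ \ge\ \int_{\O^\sh}u_{f^\sh,\O^\sh}-\int_\O u_{f,\O},
\]
using equimeasurability $\int u^\sh=\int u$. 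The case $p=1$ then concludes, with constant divided by $|\O^\sh|$. The only delicate points are justifying that $v_{f,\O}$ is continuous at $0$, or at least that its essential supremum is its value at the centre, and that the $L^\infty$ norms coincide with these values at the origin. Both follow from the continuity of $u_{f,\O}$ given bounded $f$.
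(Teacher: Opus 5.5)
Your proposal is correct and follows the paper's proof. For $p<\infty$ you apply Theorem~\ref{Th:Main} with $j(x)=x^p$ and pass from $p$-th powers to norms; your concavity bound for $t\mapsto t^{1/p}$ is exactly the paper's inequality $1-x^p\le p(1-x)$ with $x=(B/A)^{1/p}$. For $p=+\infty$ you argue as the paper does: $v_{f,\O}$ is radially non-increasing, so $v_{f,\O}(0)\ge \frac{1}{|\O|}\int_{\O^\sh}v_{f,\O}$, and you then invoke the case $j(x)=x$. Your explicit upper bound on $A$ via $u_{f^\sh,\O^\sh}\le \Vert f^\sh\Vert_{L^\infty}w_{\O^\sh}$ fills in a step the paper leaves implicit.
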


Our last result discusses comparison with $L^1$ distances and counter-examples to such comparison.
\begin{theorem}\label{Th:CE}

\begin{enumerate}
    \item\underline{Quadratic $L^1$ stability in dimension $d\geq 2$:}  Assume $d\geq 2$. Under the same assumptions as in Theorem \ref{Th:Main}, there exists a constant $c>0$ depending only on the dimension, on $j$, on $\Vert f^\sharp\Vert_{L^\infty(\O^\sharp)},\, \Vert f^\sharp\Vert_{L^1(\O^\sharp)}$ and on $|\O^\sh|$ such that, for any open $\O\subset \R^d$, there holds 
\begin{equation}\label{Eq:L1Stability}
 \int_{\O^\sh}j(u_{f^\sharp,\O^\sh})-\int_\O j(u_{f,\O})\geq c\left(\alpha(\O)^2+\inf_{z\in \R^d}\Vert f-f^\sharp(\cdot-z)\Vert_{L^1(\R^d)}^2\right),\end{equation}and the exponent is sharp.
    \item\underline{Sharp cubic $L^1$ stability in dimension $d=1$:} Assume $d=1$. Under the same assumptions as in Theorem \ref{Th:Main},  there exists a constant $c>0$ depending only on $j$, $\Vert f^\sharp\Vert_{L^\infty(\O^\sh)},\, \Vert f^\sharp\Vert_{L^1(\O^\sh)}$ and on $|\O^\sh|$ such that, for any open $\O\subset \R$, there holds  
\begin{equation}\label{Eq:Cubic}
\int_{\O^\sh}j(u_{f^\sharp,\O^\sh})-\int_\O j(u_{f,\O})\geq c\left(\alpha(\O)+\inf_{z\in \R} \Vert f-f^\sharp(\cdot-z)\Vert_{L^1(\R)}^3\right),
\end{equation} and the exponent $3$ is sharp when $j:x\mapsto x^p$. 
    \item\underline{No quadratic $L^1$ stability in dimension $d\in \{2,3\}$ if $f$ is not bounded: } Let $d\in \{2,3\}$ and $\O=\O^\sharp=\B(0;1)$. Write, for the sake of readability, $u_f=u_{f,\O^\sharp}.$ Then there exists $f\in L^2(\B(0;1))$ and a sequence   $\{f_k\}_{k\in \N}$ of functions such that  $f_k^\sharp=f^\sharp$ for every $k$, and such that, for any $p\in [1;+\infty)$
\begin{equation}\label{Eq:CEd} \frac{\Vert u_{f^\sharp}\Vert_{L^p(\B(0;1))}^p-\Vert u_{f_k}\Vert_{L^p(\B(0;1))}^p}{\inf_{z\in \R^d}\Vert f^\sharp-f_k(\cdot-z)\Vert_{L^1(\R^d)}^2}\underset{k\to \infty}\longrightarrow 0.\end{equation}
\end{enumerate}
\end{theorem}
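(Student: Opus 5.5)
Theorem \ref{Th:CE} has three parts, and I would treat them separately; the first two should be consequences of Theorem \ref{Th:Main}.

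\emph{Part 1.} It suffices to show that, for every $z$,
\[ \Vert f-f^\sharp(\cdot-z)\Vert_{L^1(\R^d)}\lesssim \mathscr A(f,z).\]
By the layer-cake formula,
\[ \Vert f-f^\sharp(\cdot-z)\Vert_{L^1}\leq \int_0^{M}\big|\{f>t\}\triangle\{f^\sharp(\cdot-z)>t\}\big|\,dt,\qquad M=\Vert f^\sharp\Vert_{L^\infty}.\]
Writing the integrand as $\mu_f^{\frac1d-\frac12}|\cdots|\cdot \mu_f^{\frac12-\frac1d}$ and applying Cauchy--Schwarz gives
\[ \Vert f-f^\sharp(\cdot-z)\Vert_{L^1}^2\leq \mathscr A(f,z)^2\int_0^M\mu_f(t)^{1-\frac2d}dt.\]
For $d\geq 2$ the exponent $1-\frac2d$ is nonnegative. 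Hence the last integral is at most $M|\O|^{1-\frac2d}$, which depends only on the allowed quantities. Then \eqref{Eq:L1Stability} follows from \eqref{Eq:Talentij}, taking the infimum in $z$.

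For sharpness of the exponent $2$, take $f\equiv 1$ and $j(x)=x$. The deficit is the torsion deficit, which is quadratic in $\alpha(\O)$ by Remark \ref{ReALL}, while $\inf_z\Vert f-f^\sharp(\cdot-z)\Vert_{L^1}\simeq \alpha(\O)|\O|$. So no exponent smaller than $2$ can hold.

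\emph{Part 2 ($d=1$).} Here $\mu_f^{\frac2d-1}=\mu_f$, so the Cauchy--Schwarz trick fails because the weight vanishes where $\mu_f$ is small. I would instead use the trivial bound $|\{f>t\}\triangle\{f^\sharp(\cdot-z)>t\}|\leq 2\mu_f(t)$. Let $g(t)$ denote the symmetric difference. Then
\[ \mathscr A^2=\int \mu_f\, g^2\geq \tfrac12\int g^3,\]
and by Hölder $\int_0^M g^3\geq M^{-2}\big(\int_0^M g\big)^3$. This gives $\mathscr A^2\gtrsim \Vert f-f^\sharp(\cdot-z)\Vert_{L^1}^3$, and \eqref{Eq:Cubic} follows from \eqref{Eq:Talentij1d}.

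For sharpness with $j=x^p$, take $\O=(-1,1)$ and $f=\mathds 1_{(-1,1)}+\mathds 1_{(\e,\e+\delta)}$ with $\e\simeq\delta$ small, so that the top level set $\{f>1\}$ is a tiny interval displaced by an amount comparable to its size. Its $L^1$ distance is $\simeq \delta$. Then I would compute the deficit explicitly through the Green function of the interval. The expected mechanism is that the first variation vanishes at the symmetric configuration, so the deficit is governed by $\mu_f g^2\sim \delta^3$. This gives the ratio $\delta^3/\delta^3$ and hence sharpness of the exponent $3$. This explicit one-dimensional computation, including the choice of the optimal translation $z$, is the main obstacle in Part 2.

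\emph{Part 3.} Take $\O=\B(0;1)$ and $f$ unbounded, say $f=f^\sharp=|x|^{-a}$ with $a<d/2$ so that $f\in L^2$. Let $f_k$ be obtained by swapping the values of $f^\sharp$ on a tiny ball $\B(0;r_k)$ and on a disjoint tiny set $E_k$ of equal measure placed near the centre.

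Using Remark \ref{Re:AL}, or rather its converse upper bound, the deficit is controlled by a quantity of the type $\int \mu_f^{\frac2d-1}g^2$. This quantity is small near high levels because $\mu_f$ is small there and $\frac2d-1\leq 0$ is mild in $d=2,3$. Meanwhile $\Vert f-f^\sharp\Vert_{L^1}\sim\int g\,dt$ stays large relative to it, since $f$ is large on the displaced set.

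Concretely, I would estimate the deficit from above via the energy $\int f_k u_{f_k}$, writing $u_{f^\sharp}-u_{f_k}=u_{f^\sharp-f_k}$ and bounding it by $\Vert f^\sharp-f_k\Vert_{H^{-1}}$-type quantities. In $d=2,3$ the $H^{-1}$ norm of a mass $m$ concentrated on a set of size $r$ scales like $m^2 r^{2-d}$ (up to a logarithm when $d=2$). This is $o(m^2)$ only if $d<2$, so I must exploit the first-order cancellation instead. The delicate step, which I expect to be the real obstacle, is choosing $a$ and the geometry so that the linear term $\int (f^\sharp-f_k)u_{f^\sharp}$, which is small because $u_{f^\sharp}$ is flat near $0$, plus the quadratic $H^{-1}$ term are $o(\Vert f^\sharp-f_k\Vert_{L^1}^2)$.
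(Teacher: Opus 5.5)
\textbf{Parts 1 and 2.} The two inequalities are proved exactly as in the paper: Cauchy--Schwarz with the weight $\mu_f^{1-\frac2d}$ (Lemma \ref{Le:Comparison}), and in $d=1$ the bound $|\{f>t\}\triangle\{f^\sharp(\cdot-z)>t\}|\le 2\mu_f(t)$ followed by H\"older.

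Your sharpness examples differ from the paper's but are sound.
\begin{itemize}
\item In Part 1 you use $f\equiv 1$ and the sharpness of the quantitative Saint-Venant inequality. The paper instead refers to $f=\mathds 1_E$ on a fixed ball \cite{zbMATH08185219}. Your example shows the $L^1$ exponent cannot be lowered in \eqref{Eq:L1Stability}; the paper's shows this even when $\alpha(\Omega)=0$.
\item In Part 2 your two-level example actually attains the cubic rate, whereas the paper's $f_\gamma=1-|x|^\gamma$ only approaches it as $\gamma\to0$. Since your $f^\sharp$ changes with $\delta$, you should adjust the two heights so that $\Vert f^\sharp\Vert_{L^\infty}$ and $\Vert f^\sharp\Vert_{L^1}$ stay fixed.
\item What you call the main obstacle in Part 2 takes one line. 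The map $J(f)=\frac1p\int u_f^p$ is convex on nonnegative $f$, so $J(f^\sharp)-J(f)\le\int\omega(f^\sharp-f)$, where $-\omega''=u_{f^\sharp}^{p-1}$ and $\omega(\pm1)=0$. Here $\omega$ is even, $\mathscr C^{2}$ uniformly in $\delta$, and $\omega''(0)<0$. Hence this bound is $\simeq\delta\varepsilon^2\simeq\delta^3$, while $\inf_z\Vert f-f^\sharp(\cdot-z)\Vert_{L^1}\simeq\delta$.
\end{itemize}

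\textbf{Part 3: there is a genuine gap, and your concrete route would fail.}

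First, the quadratic term is not an obstacle. By the same convexity, $\Vert u_{f^\sharp}\Vert_p^p-\Vert u_{f_k}\Vert_p^p\le p\int\omega\,(f^\sharp-f_k)$, where $-\Delta\omega=u_{f^\sharp}^{p-1}$ in $\mathbb B(0;1)$ and $\omega=0$ on the boundary. No $H^{-1}$ estimate is needed.

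Second, and more seriously, the weight in the linear term is this adjoint state $\omega$, not $u_{f^\sharp}$. Your $\int(f^\sharp-f_k)u_{f^\sharp}$ is, up to a factor $2$, the first variation of $f\mapsto\int fu_f$, which is not the deficit for any $p$. This matters quantitatively. For $f^\sharp=|x|^{-a}$ one computes $u_{f^\sharp}(0)-u_{f^\sharp}(x)=\frac{|x|^{2-a}}{(d-a)(2-a)}$. So with your weight and a swap at scale $\delta$, the linear term is of order $\delta^{d+2-2a}$, while $\Vert f^\sharp-f_k\Vert_{L^1}^2\simeq\delta^{2(d-a)}$. The ratio is $\delta^{2-d}$, which does not tend to $0$ for $d\ge2$.

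The missing idea is that $\omega$ gains two derivatives over $u_{f^\sharp}$:
\begin{itemize}
\item Since $f^\sharp\in L^2$ and $d\le3$, $u_{f^\sharp}\in W^{2,2}\hookrightarrow\mathscr C^{0,\eta}$. Hence $\omega\in\mathscr C^{2,\eta}$ and $\omega(x)-\omega(0)=-c_0|x|^2+O(|x|^{2+\eta})$.
\item Swap $f^\sharp$ between $\mathbb B(0;\delta)$ and the annulus $\delta<|x|<2^{1/d}\delta$. This gives a deficit $\lesssim\delta^{d+2-a}$ and an $L^1$ distance $\simeq\delta^{d-a}$. As the paper does, you must also check that translations do not reduce the latter.
\item The ratio is then $\delta^{2+a-d}$, which tends to $0$ if and only if $a>d-2$.
\item Together with $a<d/2$ (needed for $f\in L^2$), this is possible exactly when $d<4$. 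That is the source of the restriction $d\in\{2,3\}$; your proposal never pins down this range of $a$.
\end{itemize}

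Your heuristic is also off. No converse to Remark \ref{Re:AL} is available. Moreover, for $d=3$ the weight $\mu_f^{\frac2d-1}=\mu_f^{-1/3}$ is large, not small, at high levels. The actual mechanism is that, for unbounded $f^\sharp$, the constant $\int_0^\infty\mu_f^{1-\frac2d}\,dt$ in your Part 1 Cauchy--Schwarz step can be infinite. For $|x|^{-a}$ in $d=3$, this happens exactly when $a\ge1$.
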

\begin{remark}
The sharpness claimed in \eqref{Eq:L1Stability} is to be understood in the following sense: when $f=\mathds 1_E$, the squared $L^1$ distance was shown to be sharp in \cite{zbMATH08185219}. We conjecture that the squared $L^1$ distance is sharp for any $0\leq f^\sharp\in L^\infty(\O^\sharp)$ and $j$ satisfying \eqref{Eq:Hypj} but leave that as an open problem.
\end{remark}
\subsection{Bibliography and comparison with our results}\label{Se:Bib}
Quantitative inequalities have been a central topic in shape optimisation over the last two decades. Starting from the quantitative isoperimetric inequality by Fusco, Maggi \& Pratelli \cite{zbMATH05578735} a number of geometric and functional inequalities have been quantified in sharp ways. Of major importance to us is the quantitative Saint-Venant inequality, proved by Brasco, De Philippis \& Velichkov  \cite{zbMATH06464861} as a means to obtaining the quantitative Faber-Krahn inequality. We single it out here for future reference: introducing, for any open bounded set $\O$ the torsion function 
\begin{equation}\label{Eq:Torsion}
\begin{cases}-\Delta w_\O=1&\text{ in }\O,\, 
\\ w_\O \in W^{1,2}_0(\O)\end{cases}\end{equation} and 
\[ T(\O):=\int_\O w_\O,\] they prove that, in dimension $d\geq 2$, there exists a constant $c=c(d)>0$ such that, for any open set $\O$ with finite volume, there holds 
\begin{equation}\label{Eq:QuantitativeSV}
|\O|^{-1-\frac2d}\left(T(\O^\sharp)-T(\O)\right)\geq c \alpha(\O)^2.\end{equation} In dimension 1, we shall prove (see Lemma \ref{Le:SV1d}) that there exists a constant $c>0$ such that, for any open set $\O$ with finite volume, there holds 
\begin{equation}\label{Eq:QuantitativeSV1d}
|\O|^{-3}\left(T(\O^\sharp)-T(\O)\right)\geq c \alpha(\O).\end{equation}The proof, naturally, is much easier in the one-dimensional case and follows from direct computations.

Regarding Talenti inequalities, some partial results were previously obtained by several authors:
\begin{enumerate}
\item In the case $\O=\O^\sharp$ and  when $f$ is a characteristic function a sharp quantitative parabolic Talenti inequality for the $L^2$ norm was obtained by one of the authors in \cite{zbMATH07439881}.
\item In \cite{K21}, the author focuses on the case $f\equiv 1$ and shows that:
\[
\begin{split}
	&\|u_{1^\sharp,\O^\sharp}\|^p_{L^p(\Om^\sharp)}-\|u_{1,\O}\|^p_{L^p(\Om)}\geq c\alpha(\Om)^{2+p}\qquad \textrm{ if }p\in[1,+\infty),\\ &\|u_{1^\sharp,\O^\sh}\|_{L^\infty(\Om^\sharp)}-\|u_{1,\O}\|_{L^\infty(\Om)}\geq c\alpha(\Om)^{3}.
\end{split}
\]
Notice that here $1^\sharp$ is the constant function defined on the centered ball $\Om^\sharp$. Theorem \ref{Th:Main} improves these results.
\item In \cite{arXiv:2311.18617} the authors focus on the case $p=\infty$: it is shown that there exist $c=c(d,|\Om|,f^\sharp)$ and $\theta=\theta(d)$ such that
\begin{equation}\label{eq:ABMP}\|u_{f^\sharp}\|_{L^\infty(\Om^\sharp)}-\|u_f\|_{L^\infty(\Om)}\ge c\left(\alpha(\Om)^3+\inf_{x_0\in\R^d}\Big\|f-f^\sharp(\cdot+x_0)\Big\|_{L^1(\R^d)}^\theta\right).
\end{equation}
In the same paper, the authors also obtain a partial result in the case $p=2$, namely:
$$\|u_{f^\sharp}\|_{L^2(\Om^\sharp)}^2-\|u_f\|_{L^2(\Om)}^2\ge c\alpha(\Om)^4.$$
In other words, the control of the $L^2$-Talenti deficit by the asymmetry of $f$ remained open. Our results extend and improve these results.
\item In the results described in items 2 and 3, the  exponents obtained are not expected to be optimal. On the contrary, in \cite{zbMATH08185219} the first two authors obtained a sharp result in the particular case where $\Om=\B(0;1)$ and $f$ is the characteristic function of a set. More precisely it is shown that for any $p\in [1,\infty]$ and any $E\subset \B(0;1)$ with $0<|E|<|\B(0;1)|$, there exists $c=c(p,|E|)$ such that
\begin{equation}\label{eq:al}
\|u_{f^\sharp}\|_{L^p(\B(0;1))}-\|u_f\|_{L^p(\B(0;1))}\ge c\|f^\sharp-f\|^2_{L^1(\B(0;1))},
\end{equation}
where $f=\mathds 1_E$, and that the exponent 2 is sharp. This result from \cite{zbMATH08185219} is a particular case of Theorem \ref{Th:Main}: from Remark \ref{Re:AL}, we deduce that for any $f\in L^\infty(\B(0;1)),\, f\geq 0$ and any $p\in [1;+\infty]$
\[\|u_{f^\sharp}\|_{L^p(\B(0;1))}-\|u_f\|_{L^p(\B(0;1))}\ge c\int_0^\infty \mu_f(t)^{\frac2d-1}\left|\{f> t\}\triangle \{f^\sharp> t\}\right|^2dt,\] so that, when $f=\mathds 1_E$, this expression simplifies to
\[\|u_{f^\sharp}\|_{L^p(\B(0;1))}-\|u_f\|_{L^p(\B(0;1))}\ge c|E^\sharp|^{\frac2d-1}|E\triangle E^\sharp|^2.\] As the constant $c$ only depends on $|E^\sharp|$ and $p$, this gives the required result.
\end{enumerate}

\section{Proof of Theorem \ref{Th:Main}}
The proof follows several steps; this section is structured accordingly. Throughout, we assume $j$ satisfies \eqref{Eq:Hypj} and we let $\O$ be an open set with finite measure. We consider a reference configuration $f_{ref}$ that is equal to its rearrangement ($f_{ref}=f_{ref}^\sharp$).  Up to scaling we assume 
\[ |\O|=|\B(0;1)|.\]We introduce the notations
\[ \mathscr F(\O):=\left\{f\in L^\infty(\O):\, f\geq 0,\, f^\sharp=f_{ref}\right\},\]
\[ D_j(f,\O):=\int_{\O^\sharp} j(u_{f^\sharp,\O^\sharp})-\int_\O j(u_{f,\O})\] and
\[ \mathscr D_p(f,\O):=\int_{\O^\sharp}\left|u_{f^\sharp,\O^\sharp}\right|^p-\int_\O \left|u_{f,\O}\right|^p.\]
\subsection{Reduction to the $L^1$ deficit}
We begin with the following lemma: 
\begin{lemma}\label{Le:Torsion}
There exists a constant $c>0$ that only depends on $\Vert f^\sharp\Vert_{L^\infty(\O^\sharp)},\,\Vert f^\sharp\Vert_{L^1(\O^\sharp)},\, |\O^\sharp| $, $j$ and  $d$ such that 
\begin{equation} \forall f \in \mathscr F(\O),\, \, D_j(f,\O)\geq c \mathscr D_1(f,\O).\end{equation}
\end{lemma}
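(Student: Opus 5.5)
We want $D_j(f,\O)\geq c\,\mathscr D_1(f,\O)$, where $\mathscr D_1(f,\O)=\int_{\O^\sh}u_{f^\sh,\O^\sh}-\int_{\O}u_{f,\O}$. The plan is to pass to rearrangements and use the radial monotonicity of Talenti's comparison function. Since $j(u_{f,\O})$ and $j(u_{f,\O}^\sh)$ are equimeasurable, $\int_\O j(u_{f,\O})=\int_{\O^\sh}j(u^\sh_{f,\O})$. Write $U:=u_{f^\sh,\O^\sh}$ and $u:=u^\sh_{f,\O}$, both defined on $\O^\sh$. Then
\[
D_j(f,\O)=\int_{\O^\sh}\bigl(j(U)-j(u)\bigr),\qquad \mathscr D_1(f,\O)=\int_{\O^\sh}(U-u).
\]
Talenti's theorem gives $0\le u\le U$. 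Moreover $v_{f,\O}=U-u$ is radially non-increasing and nonnegative.

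\emph{Pointwise convexity bound.} By convexity of $j$,
\[
j(U)-j(u)\ge j'(u)\,(U-u).
\]
If $j'$ were bounded below by a positive constant on $[0,\|U\|_\infty]$, we would be done with $c=\inf j'$. However, $j'(0)$ may vanish (e.g.\ $j(x)=x^p$), so this bound degenerates where $u$ is small, that is, near $\partial\O^\sh$. This is the main obstacle. The assumption \eqref{Eq:Hypj} is designed to handle it, together with the monotonicity of $v=U-u$.

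\emph{Splitting $\O^\sh$.} Take $K:=\|U\|_{L^\infty}$. This is explicitly bounded in terms of $\|f^\sh\|_{L^\infty}$ and $|\O^\sh|$, e.g.\ by $\|f^\sh\|_\infty\, w_{\O^\sh}(0)$.

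Fix a small radius $r_0\in(0,R)$, where $\O^\sh=\B(0;R)$. Because $U$ is a radial solution with source $f^\sh\not\equiv0$, we have $U\ge \delta>0$ on $\B(0;r_0)$, with $\delta$ controlled from below.

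It is more robust to work with $U$ rather than $u$. Since $u\ge 0$, convexity gives
\[
j(U)-j(u)\ge j(U)-j(U-v)\ge \min\bigl(j'(U/2)\,v,\ j(U)-j(U/2)\bigr),
\]
after distinguishing the cases $v\le U/2$ and $v>U/2$. On $\B(0;r_0)$ the quantity $U$ is bounded below, so $j(U)-j(u)\gtrsim v$ there, with a constant depending only on $j$, $\delta$ and $K$.

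\emph{Controlling the outer region.} It remains to show that $\int_{\B(0;r_0)}v$ controls $\int_{\O^\sh}v$. This follows from radial monotonicity: $v$ is radially non-increasing and nonnegative, so its average over $\B(0;r_0)$ dominates its average over $\O^\sh$. Hence
\[
\int_{\O^\sh}v\le \Bigl(\frac{R}{r_0}\Bigr)^d\int_{\B(0;r_0)}v.
\]
Combining the two regions yields $D_j\ge c\,\mathscr D_1$.

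The hypothesis $j(x)-j(0)\ge c\,xj'(x)$ from \eqref{Eq:Hypj} remains available as a fallback on the region where $u$ is near zero. In that region,
\[
j(U)-j(u)\ge j(U)-j(0)-\bigl(j(u)-j(0)\bigr),
\]
and the hypothesis bounds $j(U)-j(0)$ from below by $c\,Uj'(U)$. The main point to check carefully is the lower bound $\delta$ for $U$ on $\B(0;r_0)$ depending only on the admissible data. I expect this to follow from comparing $U$ with the radial solution whose source is $\|f^\sh\|_{L^1}$ concentrated near the origin, or directly from the explicit radial formula for $U$.
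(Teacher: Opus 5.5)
Your proof is correct, but it takes a different route from the paper.

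\textbf{The paper's route.} The paper uses the secant bound $\frac{j(U)-j(u)}{U-u}\geq\frac{j(U)-j(0)}{U}\geq c\,j'(U)$; this is where the condition $j(x)-j(0)\geq c\,xj'(x)$ of \eqref{Eq:Hypj} enters. It then applies the Chebyshev inequality \eqref{Eq:HLW} globally to the two radially non-increasing functions $j'(U)$ and $v$. This gives $D_j\gtrsim\bigl(\int j'(U)\,d\mu\bigr)\int v\,d\mu$, and the first factor is bounded below via $U\geq\frac{\Vert f^\sh\Vert_{L^1}}{\omega_d}w_{\O^\sh}$.

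\textbf{Your route.} You instead localise. You prove a pointwise bound $j(U)-j(u)\gtrsim v$ on an inner ball where $U\geq\delta$. You then use the monotonicity of ball averages of $v$, which is exactly Chebyshev's inequality with the truncated weight $h_1=\mathds 1_{\B(0;r_0)}$. Your case split is sound. In the case $v>U/2$ you correctly use $v\leq U\leq K$ to turn the constant $j(U)-j(U/2)$ into a multiple of $v$. Together with $j(U)-j(u)\geq 0$ off the inner ball, this closes the argument.

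\textbf{What each route buys.} Yours needs only convexity of $j$ and $j'>0$. So your final ``fallback'' paragraph is unnecessary and can be deleted; as written it is also incomplete, since $j(u)-j(0)$ is left uncontrolled. To be fair, the paper's argument could drop that hypothesis too, by taking the secant slope $\frac{j(U)-j(0)}{U}$ as the Chebyshev weight; it is itself radially non-increasing. The paper's version is shorter and needs no auxiliary radius $r_0$.

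\textbf{The step you flagged.} Comparing with a source concentrated near the origin goes the wrong way, since that produces a \emph{larger} solution. The lower bound comes from the explicit radial formula. Since $f^*$ is non-increasing, the mass of $f^\sh$ in $\B(0;t)$ is at least $(t/R)^d\Vert f^\sh\Vert_{L^1}$. This gives $U\geq\frac{\Vert f^\sh\Vert_{L^1}}{|\O^\sh|}w_{\O^\sh}$, so you may take $\delta=\frac{\Vert f^\sh\Vert_{L^1}}{|\O^\sh|}\cdot\frac{R^2-r_0^2}{2d}$. This depends only on admissible data, and it is the same bound the paper uses.
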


\begin{proof}[Proof of Lemma \ref{Le:Torsion}] The function
\[ v_f=u_{f^\sharp,\O^\sharp}-u_{f,\O}^\sharp\] is radially symmetric and decreasing \cite[Proof of Theorem 1]{zbMATH03531830}.
Observe that, in radial coordinates  (with abuse of notation we write radial functions $u=u(x)=u(|x|)$)
\[ D_j(f,\O)=c_d\int_0^1 (j(u_{f^\sharp,\O^\sharp})-j(u_{f,\O}^\sharp))r^{d-1}dr\] for some constant $c_d$. The function $j$ is monotone increasing so that (assuming without loss of generality that $j(0)=0$)
\[ j(u_{f,\O})^\sharp=j(u_{f,\O}^\sharp).\]The convexity of $j$, \eqref{Eq:Hypj} and the fact that $0\leq u_{f,\O}^\sharp\leq u_{f^\sharp,\O^\sharp}$ imply
\[ \frac{j(u_{f^\sharp,\O^\sharp})-j(u_{f,\O}^\sharp)}{u_{f^\sharp,\O^\sharp}-u_{f,\O}^\sharp}\geq \frac{j(u_{f^\sharp,\O^\sharp})-j(0)}{u_{f^\sharp,\O^\sharp}}\geq c j'(u_{f^\sharp,\O^\sharp}).\] As $v_f\geq 0$, this implies that, up to changing the constant $c>0$,
\[ D_j(f,\O)\geq c \int_0^1 j'(u_{f^\sharp,\O^\sharp})\left(u_{f^\sharp,\O^\sharp}-u_{f,\O}^\sharp\right)r^{d-1}dr.\] We recall the Chebyshev integral inequality: if $h_1,\, h_2$ are two non-increasing, non-negative functions and $\mu$ is a probability density, 
\begin{equation}\label{Eq:HLW} \int_0^1 h_1 h_2 d\mu\geq \int_0^1 h_1 d\mu\int_0^1 h_2 d\mu.\end{equation} We refer to \cite[Section 2.17]{zbMATH03073200}, but also note that this follows  from the exact expression
\[ \int_0^1 h_1 h_2 d\mu-\int_0^1 h_1d\mu \int_0^1 h_2d\mu=\frac12 \int_0^1\int_0^1 (h_1(x)-h_1(y))(h_2(x)-h_2(y))d\mu(x)d\mu(y).\] Note that $j'(u_{f^\sharp,\O^\sharp})\in L^1(\B(0;1))$ as $u_{f^\sharp,\O^\sharp}\in L^\infty(\O^\sharp)$ and $j'$ is locally bounded by convexity and monotonicity. Furthermore, since $u_{f^\sharp,\O^\sharp}$ is explicit in radial coordinates, namely, 
\[u_{f^\sh,\O^\sh}(x)=\int_{|x|}^{1}\frac{1}{t^{d-1}}
\left(\int_{0}^{t}s^{d-1}f^*(\omega_d s^d)ds\right)dt,\] so that 
\[
\frac{\Vert f^\sh\Vert_{L^1(\O^\sh)}}{\omega_d}w_{\O^\sh}
\leq u_{f^\sharp,\O^\sh}
\leq \Vert f^\sh\Vert_{L^\infty(\O^\sh)} w_{\O^\sh}.
\]Thus, constants depending only on $u_{f^\sh,\O^\sh}$ and $j$ only depend on $j$ and $\Vert f^\sh\Vert_{L^\infty(\O^\sh)},\, \Vert f^\sh\Vert_{L^1(\O^\sh)}.$

In particular, considering $d\mu(r):=c_d' r^{d-1}dr$  for some dimensional constant $c_d'$ we deduce
\[ D_j(f,\O)\gtrsim \int_0^1 \left(u_{f^\sharp,\O^\sharp}-u_{f,\O}^\sharp\right)r^{d-1}\,dr=\frac1{d\omega_d}\left(\int_{\O^\sharp}u_{f^\sharp,\O^\sharp}-\int_\O u_{f,\O}\right)\] for some constant only depending on $\Vert f^\sharp\Vert_{L^\infty(\O^\sharp)},\,\Vert f^\sharp\Vert_{L^1(\O^\sharp)},\, |\O^\sharp| $, $j$ and $d$.

\end{proof}
As a consequence, it suffices to prove Theorem \ref{Th:Main} for $j:x\mapsto x$.
A convenient rewriting of $\mathscr D_1$ is given by the following result:
\begin{lemma}\label{Le:D1Torsion}
There holds
\[ \mathscr D_1(f,\O)=\int_{\O^\sharp}f^\sharp w_{\O^\sharp}-\int_\O fw_\O\] where $w_\O$ solves \eqref{Eq:Torsion}.
\end{lemma}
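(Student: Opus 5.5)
The identity is a direct consequence of the symmetry of the Green operator of the Dirichlet Laplacian, applied once on $\O$ and once on $\O^\sharp$. Recall that, with $\mathscr D_1(f,\O)=\int_{\O^\sharp}u_{f^\sharp,\O^\sharp}-\int_\O u_{f,\O}$ (both solutions being non-negative by the maximum principle), it suffices to prove that for any open set $\O$ of finite volume and any $f\in L^\infty(\O)$,
\[ \int_\O u_{f,\O}=\int_\O f w_\O. \]
Applying this to the pair $(f,\O)$ and to the pair $(f^\sharp,\O^\sharp)$ and subtracting then yields the lemma.

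To prove the displayed identity, I will use the weak formulations of \eqref{Eq:Poisson} and \eqref{Eq:Torsion}. Since $\O$ has finite volume, the Poincaré inequality (or the Faber–Krahn inequality) holds on $W^{1,2}_0(\O)$. Hence $f\in L^\infty(\O)\subset L^2(\O)$ and $1\in L^2(\O)$ define continuous linear functionals on $W^{1,2}_0(\O)$, and Lax–Milgram gives existence and uniqueness of $u_{f,\O}$ and $w_\O$ in $W^{1,2}_0(\O)$. The key step is testing each equation against the other solution.

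First, test the equation for $w_\O$ with $\varphi=u_{f,\O}\in W^{1,2}_0(\O)$:
\[ \int_\O \nabla w_\O\cdot\nabla u_{f,\O}=\int_\O u_{f,\O}. \]
Next, test the equation for $u_{f,\O}$ with $\varphi=w_\O\in W^{1,2}_0(\O)$:
\[ \int_\O \nabla u_{f,\O}\cdot\nabla w_\O=\int_\O f w_\O. \]
The left-hand sides coincide, which gives $\int_\O u_{f,\O}=\int_\O fw_\O$. All integrals are finite: $u_{f,\O}\in L^2(\O)\subset L^1(\O)$ because $|\O|<\infty$, and $fw_\O\in L^1(\O)$.

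No real obstacle is expected. The only point requiring care is that $\O$ is merely open with finite volume, not bounded, so one should check that $W^{1,2}_0(\O)$ is the right energy space and that the test functions are admissible. This is guaranteed by the Poincaré inequality for finite-volume sets, and the weak formulation uses nothing else.
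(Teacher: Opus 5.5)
Your proposal is correct and follows the paper's proof: you test the torsion equation against $u_{f,\O}$ and the Poisson equation against $w_\O$ to get $\int_\O u_{f,\O}=\int_\O f w_\O$, apply this on both $\O$ and $\O^\sharp$, and subtract. Your added remarks on nonnegativity, which lets you drop the absolute values in $\mathscr D_1$, and on the admissibility of $W^{1,2}_0(\O)$ for finite-volume $\O$ are correct details that the paper leaves implicit.
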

\begin{proof}[Proof of Lemma \ref{Le:D1Torsion}]
Multiplying \eqref{Eq:Torsion} by $u_{f,\O}$  (and similarly on $\O^\sharp$ with $u_{f^\sharp,\O^\sharp}$) and integrating by parts yields
\[ \int_{\O^\sh}u^\sh_{f,\O}=\int_\O u_{f,\O}=\int_\O fw_\O, \quad\int_{\O^\sharp} u_{f^\sharp,\O^\sharp}=\int_{\O^\sharp} f^\sharp w_{\O^\sharp}.\] We thus conclude
\[  \mathscr D_1(f,\O)=\int_{\O^\sharp} f^\sharp w_{\O^\sharp}-\int_\O f w_\O.\]
\end{proof}

We then show that the $L^1$ deficit controls the torsion deficit.
\begin{proposition}\label{Pr:FA}
There exists a constant $c>0$ that only depends on $\Vert f^\sharp\Vert_{L^\infty(\O^\sharp)},\,\Vert f^\sharp\Vert_{L^1(\O^\sharp)},\, |\O^\sharp| $ and $d$ such that
\begin{equation}\label{Eq:LBSV}\mathscr D_1(f,\O)\geq c\left(T(\O^\sharp)-T(\O)\right).\end{equation} As a consequence, the following holds:
\begin{enumerate}
    \item In dimension $d\geq 2$, there exists a constant $c>0$ depending on the same quantities such that
\[  \mathscr D_1(f,\O)\geq c \alpha(\O)^2.\]
    \item In dimension 1, there exists a constant $c>0$ depending on the same quantities such that
\[  \mathscr D_1(f,\O)\geq c \alpha(\O).\]
\end{enumerate}
\end{proposition}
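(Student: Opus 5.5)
The plan is to insert the symmetrised torsion function $w_\O^\sharp$ between the two terms of the expression for $\mathscr D_1$ given by Lemma~\ref{Le:D1Torsion}, and then to use Chebyshev's inequality \eqref{Eq:HLW} as in the proof of Lemma~\ref{Le:Torsion}. We write
\[ \mathscr D_1(f,\O)=\left(\int_{\O^\sharp} f^\sharp w_{\O^\sharp}-\int_{\O^\sharp} f^\sharp w_\O^\sharp\right)+\left(\int_{\O^\sharp} f^\sharp w_\O^\sharp-\int_\O f w_\O\right).\]
The second bracket is nonnegative by the Hardy--Littlewood inequality. Since both $f$ and $w_\O$ are nonnegative, we have $\int_\O f w_\O\leq \int_{\O^\sharp} f^\sharp w_\O^\sharp$. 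This can also be seen by the layer-cake formula: write $\int_\O fw_\O=\int_0^\infty\int_{\{f>t\}}w_\O\,dt$ and bound each inner integral by the integral of $w_\O^\sharp$ over the centered ball of measure $\mu_f(t)$. Hence
\[ \mathscr D_1(f,\O)\geq \int_{\O^\sharp} f^\sharp\, v,\qquad v:=w_{\O^\sharp}-w_\O^\sharp .\]

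The key structural input is Talenti's theorem applied with source term $1$. Since $1^\sharp=1$ on $\O^\sharp$ and $u_{1,\O}=w_\O$, the function $v=v_{1,\O}$ is exactly the function shown in \cite[Proof of Theorem 1]{zbMATH03531830} to be radially symmetric and non-increasing. It is also nonnegative, because it vanishes on $\partial\O^\sharp$ and is non-increasing in $r$. The function $f^\sharp$ is likewise radially non-increasing and nonnegative. Passing to radial coordinates with the probability measure $d\mu(r)=d\,r^{d-1}dr$ on $(0,1)$ (recall $|\O|=|\B(0;1)|$), the Chebyshev inequality \eqref{Eq:HLW} gives
\[ \int_{\O^\sharp} f^\sharp v\geq \frac{1}{|\O^\sharp|}\int_{\O^\sharp} f^\sharp\int_{\O^\sharp} v=\frac{\Vert f^\sharp\Vert_{L^1(\O^\sharp)}}{|\O^\sharp|}\left(T(\O^\sharp)-T(\O)\right).\]
For the last equality we used $\int_{\O^\sharp} w_\O^\sharp=\int_\O w_\O=T(\O)$, since rearrangement preserves integrals. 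This proves \eqref{Eq:LBSV} with $c=\Vert f^\sharp\Vert_{L^1(\O^\sharp)}/|\O^\sharp|$. The constant is positive since $f\not\equiv0$, and in fact this gives a constant depending only on $\Vert f^\sharp\Vert_{L^1}$ and $|\O^\sharp|$.

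The two consequences then follow directly. In dimension $d\geq2$ we combine \eqref{Eq:LBSV} with the quantitative Saint-Venant inequality \eqref{Eq:QuantitativeSV} of Brasco, De Philippis \& Velichkov, using that $|\O|$ is fixed by the normalisation. In dimension $1$ we combine \eqref{Eq:LBSV} with \eqref{Eq:QuantitativeSV1d}, that is, Lemma~\ref{Le:SV1d}, which I take as given; it is stated earlier in the paper.

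I do not expect a serious obstacle, since the argument is short. The only delicate point is the justification that $v=w_{\O^\sharp}-w_\O^\sharp$ is radially non-increasing for a general open set $\O$ of finite measure, rather than only a pointwise comparison $w_\O^\sharp\leq w_{\O^\sharp}$. This is what allows Chebyshev's inequality to be used instead of a mere sign argument. I would quote it from Talenti's proof, exactly as was done in Lemma~\ref{Le:Torsion}. If needed, I would recall the underlying differential inequality $-\frac{d}{ds}(w_\O^*)(s)\leq (d\omega_d^{1/d})^{-2}s^{\frac2d-2}\,s$ for the decreasing rearrangement, whose right-hand side is exactly $-\frac{d}{ds}$ of $w_{\O^\sharp}$ written in the variable $s=\omega_d|x|^d$.
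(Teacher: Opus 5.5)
Your proof is correct and follows the same route as the paper. The Hardy--Littlewood inequality replaces $\int_\O f w_\O$ by $\int_{\O^\sharp} f^\sharp w_\O^\sharp$; Chebyshev's inequality \eqref{Eq:HLW}, applied to the radially non-increasing pair $f^\sharp$ and $w_{\O^\sharp}-w_\O^\sharp$ (quoted from Talenti), gives \eqref{Eq:LBSV} with $c=\Vert f^\sharp\Vert_{L^1(\O^\sharp)}/|\O^\sharp|$; then \eqref{Eq:QuantitativeSV}, respectively Lemma~\ref{Le:SV1d}, yields the two consequences, and your check of the differential inequality for $w_\O^*$ against the explicit profile of $w_{\O^\sharp}$ correctly identifies why $v$ is monotone.
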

There are two parts to this proposition: the first one is \eqref{Eq:LBSV}, which reduces the $L^1$ deficit to the torsion deficit. The second part applies the quantitative Saint-Venant inequality to this torsion deficit. As explained in the introduction (see Remark \ref{Re:SV1d}), we could not locate the one-dimensional quantitative version of the Saint-Venant inequality. We thus prove it before Proposition \ref{Pr:FA}.
\begin{lemma}\label{Le:SV1d}There exists a constant $c>0$ such that, for any open set $\O\subset \R$ with finite measure, there holds \begin{equation}\label{Eq:QuantitativeSV1d1}
|\O|^{-3}\left(T(\O^\sharp)-T(\O)\right)\geq c \alpha(\O),\end{equation} and this inequality is sharp.
\end{lemma}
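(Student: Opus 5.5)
In dimension one, an open set of finite measure is a countable disjoint union of open intervals $\O=\bigcup_k I_k$ with lengths $\ell_k>0$ and $\sum_k \ell_k=|\O|=:L$. The torsion function on an interval of length $\ell$ is explicit, namely $w(x)=\frac12 (x-a)(a+\ell-x)$, so $T(I)=\ell^3/12$. Torsion is additive over connected components. We therefore get
\[ T(\O)=\frac1{12}\sum_k \ell_k^3,\qquad T(\O^\sharp)=\frac{L^3}{12}.\]
This reduces everything to an elementary inequality on sequences. By scaling, both sides of \eqref{Eq:QuantitativeSV1d1} are invariant: $T$ scales like $|\O|^3$ and $\alpha$ is scale-invariant. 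So I will normalise $L=1$ and set $\ell_1=\max_k \ell_k$ after relabelling; the maximum exists since $\ell_k\to 0$.

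\textbf{Upper bound on the asymmetry.} Take the interval $I_1$ of length $\ell_1$ and enlarge it to an interval $B$ of length $1$ containing it. Then
\[ |\O\triangle B|\leq |\O\setminus I_1|+|B\setminus I_1|=2(1-\ell_1),\]
so $\alpha(\O)\leq 2(1-\ell_1)$.

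\textbf{Lower bound on the torsion deficit.} Since $\ell_k\leq \ell_1$ for all $k$, we have $\sum_k\ell_k^3\leq \ell_1^2\sum_k \ell_k=\ell_1^2$. Hence
\[ 12\left(T(\O^\sharp)-T(\O)\right)\geq 1-\ell_1^2=(1-\ell_1)(1+\ell_1)\geq 1-\ell_1\geq \tfrac12\alpha(\O).\]
This gives \eqref{Eq:QuantitativeSV1d1} with $c=1/24$. One could alternatively use $\sum\ell_k^3\leq \ell_1^3+(1-\ell_1)^3$ by convexity, but the bound above is already enough.

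\textbf{Sharpness.} I will test on $\O_\e=(0,1-\e)\cup(2,2+\e)$.
\begin{itemize}
\item The deficit is $\frac1{12}\left(1-(1-\e)^3-\e^3\right)=\frac{1}{12}\left(3\e-3\e^2\right)\sim \frac{\e}4$.
\item The asymmetry is $\alpha(\O_\e)\leq 2\e$, and it is also bounded below by a multiple of $\e$: any unit interval meets at most one of the two components substantially, so it misses at least measure $\e$ of $\O_\e$. Hence $\alpha(\O_\e)\geq \e$.
\end{itemize}
Thus the deficit and $\alpha$ are both of order $\e$, so the linear exponent cannot be improved. In particular, no bound of the form $c\,\alpha^{q}$ with $q<1$ could hold uniformly in the sense relevant here, and the exponent $2$ valid for $d\geq2$ fails in dimension one.

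The only point requiring care is the lower bound $\alpha(\O_\e)\gtrsim\e$ in the sharpness example. A unit interval $B$ can cover at most one of the two components entirely, since they are at distance more than $1$ apart. If $B$ covers the short component, it misses at least $|(0,1-\e)\setminus B|\geq 1-\e-\e$. Otherwise it misses at least the part of $(2,2+\e)$ outside it, or symmetric-difference mass from $(0,1-\e)$. A short case check gives $|\O_\e\triangle B|\geq \e$ for small $\e$. Everything else is a direct computation.
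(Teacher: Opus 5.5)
Your proof is correct and is essentially the paper's argument. After normalising $|\Omega|=1$, you bound $\sum_k \ell_k^3\le \ell_1^2\sum_k\ell_k$ to get a deficit $\gtrsim 1-\ell_1$, then bound $\alpha(\Omega)\le 2(1-\ell_1)$ using a unit interval that contains the largest component. The only difference is your sharpness example $(0,1-\varepsilon)\cup(2,2+\varepsilon)$, which replaces the paper's $(-\varepsilon,0)\cup(\varepsilon,1)$; its well-separated components make $\alpha(\Omega_\varepsilon)=2\varepsilon$ immediate.

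One small correction to your closing remark: the quadratic bound does not fail in dimension one, since it follows from the linear one because $\alpha\le 2$. It is simply not sharp there.
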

\begin{proof}[Proof of Lemma \ref{Le:SV1d}]
We first recall that the torsion of an interval $(a;b)$ is explicitly given by
\[ T((a;b))=c|b-a|^3\] for some constant $c$. Let $\O$ be an open set with finite measure, which we write as a countable union of disjoint open intervals
\[ \O=\cup_{j=0}^\infty I_j\] where $I_j=(a_j;b_j)$. Up to relabelling, we assume that $|I_0|=\max_{j\in \N}|I_j|$. In particular, 
\begin{align*} T(\O^\sharp)-T(\O)&= c\left(|\O^\sharp|^3-\sum_{j\in \N}|I_j|^3\right)
\\&\geq c\left(|\O^\sharp|^3-|I_0|^2 \sum_{j\in \N}|I_j|\right)
\\&\geq c |\O^\sharp|^2\left(|\O^\sharp|-|I_0|\right).\end{align*} Assume up to a translation that $I_0\subset \O^\sharp$. Then
\[ \alpha(\O)\leq \frac{|\O\triangle \O^\sharp|}{|\O|}.\] As 
\[ |\O\triangle \O^\sharp|=2\left(|\O^\sharp|-|\O^\sharp \cap \O|\right)\leq 2\left(|\O^\sharp|-|\O^\sharp\cap I_0|\right)\leq 2\left(|\O^\sharp|-|I_0|\right)\] we deduce 
\[ \alpha(\O)\leq \frac2{c}|\O^\sharp|^{-3}\left(T(\O^\sharp)-T(\O)\right).\] The sharpness of this inequality follows from considering the domain $\O_\e:=(-\e;0)\cup (\e;1)$. Thus, 
\[ T(\O_\e^\sh)-T(\O_\e)= c(1-(1-\e)^3-\e^3)\sim c'\e\] for some $c'>0$. As $\alpha(\O_\e)\sim c''\e$ for some $c''>0$,  the sharpness follows.
\end{proof}
\begin{proof}[Proof of Proposition \ref{Pr:FA}]
As $v=w_{\O^\sharp}-w_\O^\sharp$ is radially non-increasing by \cite[Theorem 1]{zbMATH03531830}, and as, by the Hardy-Littlewood inequality 
\[ \int_\O fw_\O\leq \int_{\O^\sharp}f^\sharp w_\O^\sharp,\] the Chebyshev inequality \eqref{Eq:HLW}  yields
\[ \int_{\O^\sharp} f^\sharp w_{\O^\sharp}-\int_\O f w_\O\geq \frac{\Vert f^\sharp\Vert_{L^1(\O^\sharp)}}{|\O^\sharp|}\int_{\O^\sharp} (w_{\O^\sharp}-w_\O^\sharp)=\frac{\Vert f^\sharp\Vert_{L^1(\O^\sharp)}}{|\O^\sharp|}\left(T(\O^\sharp)-T(\O)\right)\] with $T(\O)=\int_\O w_\O$. The quantitative Saint-Venant inequality \eqref{Eq:QuantitativeSV} gives the conclusion in dimension $d\geq 2$, and Lemma \ref{Le:SV1d} gives it in dimension 1.
\end{proof}

\subsection{Proof in the case of bounded domains}

We begin with a technical lemma.
\begin{lemma}\label{Le:Replacement} 
There exists a constant $c>0$ that only depends on $\Vert f^\sharp\Vert_{L^1(\O^\sharp)},\, \Vert f^\sharp\Vert_{L^\infty(\O^\sharp)}$, $d$ and $|\O^\sharp|$  such that for any $f,\, f'\in \mathscr F(\O)$ there holds
\begin{equation}\label{Eq:Transfer}
\mathscr D_1(f',\O)+\mathscr D_1(f,\O)\geq c\int_0^\infty \mu_{f}(t)^{\frac2d-1}\left|\{f> t\}\triangle \{f'> t\}\right|^2dt.
\end{equation}
\end{lemma}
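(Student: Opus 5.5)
The plan is to decompose both deficits along the superlevel sets of $f$ and $f'$ (layer-cake). For each level I would compare the superlevel set of $f$ (and of $f'$) with one common reference set built from the torsion function $w_\O$, then close with a triangle inequality.

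\textbf{Reduction to single levels.} By Lemma \ref{Le:D1Torsion} and the layer-cake formula, writing $E_t=\{f>t\}$, $E'_t=\{f'>t\}$ and $m(t)=\mu_f(t)=\mu_{f'}(t)$ (the two coincide since $f^\sharp=f'^\sharp=f_{ref}$), I get
\[ \mathscr D_1(f,\O)=\int_0^\infty\Big(\int_{B_{m(t)}}w_{\O^\sharp}-\int_{E_t}w_\O\Big)dt .\]
Here $B_m$ is the centered ball of measure $m$, and $\{f^\sharp>t\}=B_{m(t)}$. The same identity holds for $f'$. Since $(a+b)^2\leq 2(a^2+b^2)$, it suffices to find, for each $m\in(0,|\O|]$, a set $F_m\subset\O$ with $|F_m|=m$ such that every measurable $E\subset\O$ with $|E|=m$ satisfies
\begin{equation*}
\delta(E):=\int_{B_m}w_{\O^\sharp}-\int_E w_\O\;\geq\; c\, m^{\frac2d-1}|E\triangle F_m|^2 .
\end{equation*}
Indeed, the triangle inequality $|E_t\triangle E'_t|\leq |E_t\triangle F_{m(t)}|+|E'_t\triangle F_{m(t)}|$ then gives the claim after integrating in $t$.

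\textbf{Transport to the ball.} Since $-\Delta w_\O=1$, the gradient $\nabla w_\O$ cannot vanish on a set of positive measure. Hence $w_\O$ has no level set of positive measure. Consequently there is a measure-preserving map $\sigma:\O\to\O^\sharp$ with $w_\O=w_\O^\sharp\circ\sigma$. I set $F_m:=\sigma^{-1}(B_m)$, which is a superlevel set of $w_\O$, and $G:=\sigma(E)$. Then $|G|=m$, $|G\triangle B_m|=|E\triangle F_m|$ and $\int_E w_\O=\int_G w_\O^\sharp$. Now I use Talenti's monotonicity: $v=w_{\O^\sharp}-w_\O^\sharp\geq 0$ (it is radially non-increasing with zero boundary values). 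This gives
\[\delta(E)=\Big(\int_{B_m}w_{\O^\sharp}-\int_G w_{\O^\sharp}\Big)+\int_G v\;\geq\;\int_{B_m}w_{\O^\sharp}-\int_G w_{\O^\sharp}.\]
The right-hand side is a pure bathtub deficit for the explicit function $w_{\O^\sharp}(x)=\frac{1-|x|^2}{2d}$, given our normalisation $|\O|=\omega_d$.

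\textbf{Bathtub estimate (main quantitative step).} Let $\rho$ be the radius of $B_m$, $s=w_{\O^\sharp}(\rho)$ and $\eta=|G\triangle B_m|/2$. Then
\[\int_{B_m}w_{\O^\sharp}-\int_G w_{\O^\sharp}=\int_{G\triangle B_m}|w_{\O^\sharp}-s|.\]
Among sets of given measure, this is minimised when $G\setminus B_m$ is a thin shell just outside $\partial B_m$ and $B_m\setminus G$ a thin shell just inside. The function $w_{\O^\sharp}$ has radial slope $r/d$, so a shell of width $\tau$ around radius $\rho$ carries a variation of $w$ of order $\rho\tau$ and a measure of order $\rho^{d-1}\tau$. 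This gives a lower bound $\gtrsim \rho^{2-d}\eta^2\sim m^{\frac2d-1}\eta^2$.

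This is the step I expect to need the most care. The heuristic must be made uniform for $\eta$ up to $m$, which concerns the inner shell reaching the origin where the slope degenerates, and for $\rho$ close to $1$, where the outer shell is cut off by $\partial\O^\sharp$ and must be placed inside the annulus $B_1\setminus B_\rho$. I would handle this by directly computing the two extremal shells, namely the inner ball $B_{\rho_-}$ and the annulus $B_{\rho_+}\setminus B_\rho$, and checking $\int|w-s|\geq c\,m^{\frac2d-1}\eta^2$ with $c=c(d)$. This uses $m\leq\omega_d$ and the fact that, when $B_\rho$ is near $\partial\O^\sharp$, all of $G\setminus B_m$ lies in a region of measure at most $\omega_d-m$ and the inner shell alone then carries mass $\eta$.

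The constant is dimensional once the volume is normalised. It therefore depends only on the quantities allowed in the statement, after undoing the scaling $|\O|=|\B(0;1)|$.
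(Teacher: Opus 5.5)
Your overall strategy is different from the paper's and can be made to work, but the transport step has a gap as written. The absence of flat level sets only gives you that $\tau:=\mu_{w_\O}\circ w_\O$ pushes Lebesgue measure on $\O$ forward to Lebesgue measure on $(0,|\O|)$, with $w_\O=w_\O^*\circ\tau$ a.e. Your identities $|\sigma(E)|=|E|$, $|\sigma(E)\triangle B_m|=|E\triangle F_m|$ and $\int_Ew_\O=\int_{\sigma(E)}w_\O^\sharp$ need more: $\sigma:\O\to\O^\sharp$ must be invertible up to null sets. Such a $\sigma$ intertwining $w_\O$ and $w_\O^\sharp$ need not exist. Take $d=1$ and $\O=(0;1)\cup(2;3)$. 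Each fibre $\{w_\O=t\}$ with $0<t<\frac18$ has four points of positive conditional mass, while each fibre of $w_\O^\sharp$ has two. An isomorphism mod $0$ with $w_\O^\sharp\circ\sigma=w_\O$ would induce, for a.e.\ $t$, an injection of the former fibre into the latter, which is impossible. In $d\geq 2$ such a $\sigma$ does exist. However, it requires a measurable family of isomorphisms between the atomless conditional measures $|\n w_\O|^{-1}\mathcal H^{d-1}$ on the level sets (coarea formula). That is a genuine additional argument, not a consequence of $|\{w_\O=t\}|=0$.

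The repair is to transport measures instead of sets. Set $F_m:=\{\tau<m\}$ and $W:=w_{\O^\sharp}^*$, and let $\theta\in[0,1]$ be the density of $\tau_\#(\mathds 1_E\,dx)$ on $(0,|\O|)$. Then $\int_0^{|\O|}\theta=m$ and $\int_0^m(1-\theta)=\int_m^{|\O|}\theta=\eta:=\frac12|E\triangle F_m|$. Talenti ($w_\O^*\leq W$) gives $\int_E w_\O\leq\int_0^{|\O|}\theta W$. Your bathtub argument in the volume variable then yields $\delta(E)\geq\int_{m-\eta}^m W-\int_m^{m+\eta}W=2K(m)-K(m-\eta)-K(m+\eta)$, with $K$ as in the paper. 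This is exactly the quantity the paper bounds below through the positivity of $\tilde h$ on $[0;1]$. Since $\eta\leq\min(m,|\O|-m)$ holds automatically, the edge cases you worried about need no separate treatment.

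The paper reaches the same concavity gap without any reference set or transport. It writes $\mathds 1_{E_1}+\mathds 1_{E_2}=\mathds 1_{E_1\cap E_2}+\mathds 1_{E_1\cup E_2}$ and applies Hardy--Littlewood and Talenti to get $\int_{E_1}w_\O+\int_{E_2}w_\O\leq K(\mu-a)+K(\mu+a)$. That is shorter and uses no information about the level sets of $w_\O$. Once repaired, your route gives a stronger one-set estimate $\delta(E)\geq c\,m^{\frac2d-1}|E\triangle F_m|^2$. Equivalently, $\mathscr D_1(f,\O)\geq c\int_0^\infty\mu_f(t)^{\frac2d-1}|\{f>t\}\triangle\{f_\O>t\}|^2dt$ directly. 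This is the form actually used in Proposition \ref{Pr:ProofR}, and it no longer needs the comparison $\mathscr D_1(f_\O,\O)\leq\mathscr D_1(f,\O)$.
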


\begin{proof}[Proof of Lemma \ref{Le:Replacement}]
Let us recall that from Lemma \ref{Le:D1Torsion}
\[
\mathscr D_1(f,\Om)= \int_{\O^\sh}f^\sh w_{\O^\sh}-\int_\O fw_\O
\]
We approach the desired estimate in the spirit of \cite{Lemou}. Specifically, let us first show that for any two $E_1,\, E_2\subset \O$, $|E_1|=|E_2|=\mu$, there holds
\begin{equation}\label{Eq:Goal1}
\mathscr D_1(\mathds 1_{E_1},\O)+\mathscr D_1(\mathds 1_{E_2},\O)\geq c_d \mu^{\frac2d-1}\left|E_1\triangle E_2\right|^2.
\end{equation}
Let us observe that the conclusion follows from \eqref{Eq:Goal1}. Indeed, first observe that the layer cake formula and $\mathds 1_{\{f>t\}}^\sh=\mathds 1_{\{f^\sh>t\}}$ give
\[ \mathscr D_1(f,\O)=\int_0^\infty \int_{\{f^\sharp>t\}}w_{\O^\sharp}dt-\int_0^\infty  \int_{\{f>t\}} w_\O dt=\int_0^\infty \mathscr D_1\left(\mathds 1_{\{f>t\}},\O\right)dt\]so that, applying \eqref{Eq:Goal1} for each $t$,
\[ \mathscr D_1(f',\O)+\mathscr D_1(f,\O)\geq c\int_0^\infty  \mu_f(t)^{\frac2d-1}\left|\{f>t\}\triangle \{f'>t\}\right|^2dt.\]
To prove \eqref{Eq:Goal1}, we consider 
\[ K:s\mapsto \int_0^{s} w_{\O^\sharp}^*.\]
We denote \[ a:=|E_1\setminus E_2|=|E_2\setminus E_1|=\frac{|E_1\triangle E_2|}2.\] By the Talenti inequality
\[ \mathscr D_1(\mathds 1_{E_1},\O)+\mathscr D_1(\mathds  1_{E_2},\O)\geq2K(\mu)- K(\mu-a)-K(\mu+a).\]
Indeed, we first note that by the Hardy-Littlewood inequality we have
\[\int_{E_1}w_\O+\int_{E_2}w_\O=\int_\O \mathds 1_{E_1\cap E_2}w_\O+\int_\O \mathds 1_{E_1\cup E_2}w_\O\leq \int_{(E_1\cap E_2)^\sharp} w_\O^\sharp+\int_{(E_1\cup E_2)^\sharp} w_\O^\sharp.\]
As, for any volume $V$ and any set $ E$ of volume $V$, the Hardy-Littlewood and Talenti inequalities  imply
\[ \int_{ E} w_\O\leq \int_{ E^\sharp}w_{\O^\sharp}= K(V)\] this gives 
\[\int_{E_1}w_\O+\int_{E_2}w_\O\leq K(\mu-a)+K(\mu+a).\] Consequently, 
\[\mathscr D_1(\mathds 1_{E_1},\O)+\mathscr D_1(\mathds  1_{E_2},\O)\geq2K(\mu)- K(\mu-a)-K(\mu+a). \]
Now, observe that $K$ is explicit: as $w_{\O^\sharp}=\frac{1-|x|^2}{2d}$, we obtain  
\[ K(V)=c_dV-c_d'V^{\frac2d+1}\] for some dimensional constants $c_d,\, c_d'>0$. Thus, 
\[ 2K(\mu)-K(\mu-a)-K(\mu+a)=c_d'\left((\mu+a)^{\frac2d+1}+(\mu-a)^{\frac2d+1}-2\mu^{\frac2d+1}\right)=:h(a).\]
Notice that 
\[ h(a)=c_d'\mu^{\frac2d-1}a^2 \tilde h\left(\frac{a}{\mu}\right)\] with 
\[\tilde h:s\mapsto \frac{(1+s)^{\frac2d+1}+(1-s)^{\frac2d+1}-2}{s^2}. \]A study of the function $\tilde h$ shows that it is positive on $[0;1]$ (with the continuous extension $\tilde h(0)=\frac2d(1+\frac2d)$); as $a\leq \mu$, the conclusion follows: there exists a constant $c>0$ such that
\[ \mathscr D_1(\mathds 1_{E_1},\O)+\mathscr D_1(\mathds 1_{E_2},\O)\geq c \mu^{\frac2d-1}a^2.\] The conclusion follows.
\end{proof}

In the upcoming proofs, we will be making use of the following quantitative Hardy-Littlewood inequality:
\begin{proposition}\label{Pr:HL}
There exists a constant $c>0$ that only depends on the dimension, on $|\O^\sh|$ and on $\Vert f^\sharp\Vert_{L^\infty(\O^\sharp)}$, $\Vert f^\sharp\Vert_{L^1(\O^\sharp)}$ such that 
\begin{equation}\label{Eq:HL}
\int_\O f(x)|x|^2dx-\int_{\O^\sharp} f^\sharp(x)|x|^2dx\geq c \mathscr A(f,0)^2 .\end{equation}
\end{proposition}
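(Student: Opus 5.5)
Everything reduces to sets via the layer cake formula, exactly as in the proof of Lemma \ref{Le:Replacement}. Writing $E_t:=\{f>t\}$ (extended by zero outside $\O$) and $E_t^\sharp=\{f^\sharp>t\}$, we have $|E_t|=|E_t^\sharp|=\mu_f(t)$. Fubini then gives
\[ \int_\O f|x|^2-\int_{\O^\sharp}f^\sharp |x|^2=\int_0^\infty\left(\int_{E_t}|x|^2dx-\int_{E_t^\sharp}|x|^2dx\right)dt.\]
Each integrand is nonnegative by the bathtub principle, since $E_t^\sharp$ is the centered ball minimising $\int_E|x|^2$ among sets of volume $\mu_f(t)$. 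It therefore suffices to prove the pointwise estimate: for every measurable $E\subset\R^d$ of finite measure $\mu>0$,
\begin{equation*}
\int_E|x|^2dx-\int_{E^\sharp}|x|^2dx\geq c_d\,\mu^{\frac2d-1}|E\triangle E^\sharp|^2 .
\end{equation*}
Integrating this in $t$ yields \eqref{Eq:HL}, with $\mathscr A(f,0)^2$ appearing exactly as defined. The constant is then purely dimensional, hence a fortiori of the claimed type.

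To prove the set estimate, let $R$ be the radius of $E^\sharp$, so $\omega_dR^d=\mu$, and let $a:=|E\setminus E^\sharp|=|E^\sharp\setminus E|=\frac12|E\triangle E^\sharp|$. We compare
\[\int_E|x|^2-\int_{E^\sharp}|x|^2=\int_{E\setminus E^\sharp}|x|^2-\int_{E^\sharp\setminus E}|x|^2.\]
\begin{itemize}
\item The set $E^\sharp\setminus E$ has measure $a$ inside $\B(0;R)$. By the bathtub principle, $\int_{E^\sharp\setminus E}|x|^2$ is at most the integral over the annulus $\B(0;R)\setminus\B(0;r_-)$ with $\omega_d(R^d-r_-^d)=a$.
\item The set $E\setminus E^\sharp$ has measure $a$ outside $\B(0;R)$. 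So $\int_{E\setminus E^\sharp}|x|^2$ is at least the integral over the annulus $\B(0;r_+)\setminus\B(0;R)$ with $\omega_d(r_+^d-R^d)=a$.
\end{itemize}
Introducing $G(V):=\int_{\B(0;(V/\omega_d)^{1/d})}|x|^2dx=c_dV^{1+\frac2d}$, the deficit is bounded below by
\[G(\mu+a)+G(\mu-a)-2G(\mu)=c_d\left((\mu+a)^{1+\frac2d}+(\mu-a)^{1+\frac2d}-2\mu^{1+\frac2d}\right).\]
This is exactly the function $h(a)$ already analysed in the proof of Lemma \ref{Le:Replacement}. Since $a\leq\mu$, the positivity of $\tilde h$ on $[0;1]$ gives the lower bound $c\,\mu^{\frac2d-1}a^2$.

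The only delicate point is the strict convexity and positivity of $\tilde h$ down to $s=0$ and up to $s=1$, which the paper already uses. In dimension $d=1$ the exponent $\frac2d-1=1$ causes no issue. One should also remark that the integrand vanishes when $\mu_f(t)=0$, consistent with the convention in the definition of $\mathscr A$. I do not expect a real obstacle: the main step is the two-annuli bathtub comparison, which is elementary.
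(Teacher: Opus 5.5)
Your proof is correct and follows the paper's route: the paper simply invokes Lemou's inequality \cite[Eq. (2.15)]{Lemou} with $\sigma(x)=|x|^2$, $B_\sigma(\mu)=c_d\mu^{(d+2)/d}$ and $\alpha(t)=\beta(t)=\frac12|\{f>t\}\triangle\{f^\sharp>t\}|$. Your level-by-level two-annuli bathtub comparison, followed by the $\tilde h$ analysis from Lemma \ref{Le:Replacement}, is a self-contained proof of exactly that cited estimate, with a purely dimensional constant. The only point left implicit is the case of unbounded $\O$: there the left-hand side may be $+\infty$ and there is nothing to prove. Otherwise Tonelli justifies the layer-cake exchange, since the integrands are nonnegative and $\int_{\O^\sharp}f^\sharp|x|^2<\infty$.
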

This inequality follows from  \cite{Lemou} (specifically, from \cite[Eq. (2.15)]{Lemou}, $\sigma(x)=| x|^2$; notice that with his notations $B_\si(\mu)=c_d\mu^{(d+2)/d}$ and $\al(t)=\be(t)=\frac12\abs{\set{f>t}\triangle\set{f^\sh>t}}$).\\

In order to prove Theorem \ref{Th:Main}, we start by proving the expected estimate when $f$ is chosen to be a maximiser of the map
\[ \Psi:\mathscr F(\O)\ni g\mapsto \int_\O gw_\O.\] 
To that end, let us begin with the following result on $\Psi$:
\begin{lemma}\label{Le:OC}
The function $\Psi$ admits a unique maximiser $f_\O$ on $\mathscr F(\O)$. Furthermore, there exists a non-decreasing function $\phi$ such that
\begin{equation}
\label{eq:f_ORearrangeswO}f_\O=\phi(w_\O).\end{equation}
\end{lemma}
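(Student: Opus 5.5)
The plan is to construct the maximiser explicitly by rearranging $f_{ref}$ along the level sets of $w_\O$, and then to deduce optimality and uniqueness from the Hardy--Littlewood inequality and its equality case. No compactness argument is needed. The key structural fact is that $w_\O$ has no flat zones, i.e. $|\{w_\O=t\}|=0$ for every $t\geq 0$. I would argue this as follows. In $\O$, $w_\O$ solves $-\Delta w_\O=1$, so it is real analytic by interior elliptic regularity. Moreover $w_\O>0$ in $\O$ by the strong maximum principle, so $\{w_\O=0\}\cap\O=\emptyset$. For $t>0$, suppose $\{w_\O=t\}$ had positive measure. Then $\nabla w_\O=0$ a.e. on that set, and hence all second derivatives vanish a.e. there as well, since $\nabla w_\O\in W^{1,2}_{loc}$ and Stampacchia's theorem applies to each component. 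This gives $\Delta w_\O=0$ on a set of positive measure, contradicting $-\Delta w_\O=1$. Alternatively, analyticity plus non-constancy on each connected component gives the same conclusion directly.

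Next, I would define $\phi(t):=f_{ref}^*(\mu_{w_\O}(t))$ and $f_\O:=\phi(w_\O)$. Since $\mu_{w_\O}$ and $f_{ref}^*$ are both non-increasing, $\phi$ is non-decreasing. I then need to check that $f_\O\in\mathscr F(\O)$, i.e. $f_\O^\sh=f_{ref}$. Because the level sets of $w_\O$ are null, $\mu_{w_\O}$ is continuous. The map $x\mapsto \mu_{w_\O}(w_\O(x))$ therefore pushes Lebesgue measure on $\O$ forward to Lebesgue measure on $(0;|\O|)$, which means $\mu_{w_\O}(w_\O)$ is uniformly distributed. It follows that $f_\O$ has the same distribution function as $f_{ref}^*$. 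Optimality then follows from the Hardy--Littlewood inequality. For any $g\in\mathscr F(\O)$,
\[\Psi(g)\leq \int_0^{|\O|} f_{ref}^*\, w_\O^*,\]
and $\Psi(f_\O)$ attains this value because $f_\O$ and $w_\O$ are comonotone. This last claim can be checked via the layer-cake formula: each superlevel set $\{f_\O>t\}$ coincides, up to a null set, with a superlevel set $\{w_\O>s\}$ of the same measure.

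For uniqueness, suppose $g\in\mathscr F(\O)$ with $\Psi(g)=\Psi(f_\O)$. Writing $\Psi(g)=\int_0^\infty\int_{\{g>t\}}w_\O\,dx\,dt$, each inner integral is bounded by the bathtub principle. Specifically, $\int_E w_\O\leq \int_{\{w_\O>s\}}w_\O$ whenever $|\{w_\O>s\}|=|E|$. Such an $s$ exists by the continuity of $\mu_{w_\O}$, and equality holds if and only if $E=\{w_\O>s\}$ up to a null set. The "only if" part again uses $|\{w_\O=s\}|=0$: on $E\setminus\{w_\O>s\}$ we have $w_\O<s$ a.e., while on $\{w_\O>s\}\setminus E$ we have $w_\O>s$. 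Since the layer-by-layer bounds sum to $\Psi(f_\O)$, equality forces $\{g>t\}=\{f_\O>t\}$ a.e. for almost every $t$. By right-continuity in $t$ of superlevel sets, taking unions over rational $t'>t$, this extends to all $t$, and hence $g=f_\O$ a.e.

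The main obstacle is the null-level-set property of $w_\O$ for an arbitrary open set of finite measure, with no regularity of $\partial\O$. I expect it to be handled purely by interior arguments, either analyticity or Stampacchia's theorem, as described above. Both existence via explicit construction and uniqueness via the strict bathtub principle hinge on it.
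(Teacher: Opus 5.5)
Your proof is correct and follows the same route as the paper. The paper likewise reduces everything to the fact that $|\{w_\O=t\}|=0$ for every $t$ (so that $\mu_{w_\O}$ is continuous and $w_\O^*$ strictly decreasing), and then invokes the strengthened Hardy--Littlewood inequality of Cianchi--Ferone (or Burton) to get existence, uniqueness and the form $f_\O=\phi(w_\O)$. The difference is that you prove this cited result by hand, via the explicit choice $\phi=f_{ref}^*\circ\mu_{w_\O}$ and a layer-by-layer strict bathtub argument. You also justify the null-level-set property (via Stampacchia or analyticity) more carefully than the paper's one-line appeal to $-\Delta w_\O>0$.
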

\begin{proof}[Proof of Lemma \ref{Le:OC}]

Observe that, as $-\Delta w_\O>0$, $|\set{w_\O=t}|=0$ for every $t\in(0,\sup w_\O)$. In particular, the distribution function of $w_\O$ never jumps, and its generalized inverse $(w_\O)^*$ is strictly decreasing. From~\cite[Theorem 1.1]{CianFer08} (see also~\cite{Burton1989}) this implies that $\Psi$ has a unique maximiser $f_\O$, and it is in the form of~\eqref{eq:f_ORearrangeswO}.
\end{proof}
We now work with $f_\O=\phi(w_\O)$ (see Lemma \ref{Le:OC}), and we prove a quantitative version of a weighted Saint-Venant inequality that only works with the asymmetry of the source.
\begin{proposition}\label{Pr:SV}
Assume $\O\subset \B(0;R)$ and that, up to translations,  $x_\O=\frac1{|\O|}\int_\O xdx=0$. There exists $c>0$ that only depends on the dimension, $R$ and on $\Vert f^\sharp\Vert_{L^1(\O^\sharp)},\, \Vert f^\sharp\Vert_{L^\infty(\O^\sharp)}$, such that  \[\mathscr D_1(f_\O,\O)= \int_{\O^\sh} f^\sharp w_{\O^\sharp}-\int_\O f_\O w_\O\geq c\mathscr A(f_\O,0)^2.\]
\end{proposition}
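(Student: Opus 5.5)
The strategy is to compare $w_\O$ with the explicit torsion function of the unit ball, $P(x):=\frac{1-|x|^2}{2d}$, extended to all of $\R^d$. Since $|\O|=|\B(0;1)|$, $\O^\sharp=\B(0;1)$, $w_{\O^\sharp}=P$ on $\O^\sharp$, and $f_\O^\sharp=f^\sharp$, we split
\[
\mathscr D_1(f_\O,\O)=\underbrace{\Big(\int_{\O^\sharp}f^\sharp P-\int_\O f_\O P\Big)}_{=:I_1}\;-\;\underbrace{\int_\O f_\O\,(w_\O-P)}_{=:I_2}.
\]
The first term is purely a rearrangement quantity. Because $f_\O$ and $f^\sharp$ are equimeasurable,
\[
I_1=\frac1{2d}\Big(\int_\O f_\O|x|^2-\int_{\O^\sharp}f^\sharp|x|^2\Big)\geq c\,\mathscr A(f_\O,0)^2
\]
by the quantitative Hardy--Littlewood inequality of Proposition \ref{Pr:HL}. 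It therefore remains to show that $I_2$ is small. The target is
\begin{equation}\label{Eq:PlanI2}
I_2\leq \tfrac12 I_1+C\big(T(\O^\sharp)-T(\O)\big),
\end{equation}
with $C$ depending only on $d$, $R$ and the norms of $f^\sharp$. Given \eqref{Eq:PlanI2}, we get $\mathscr D_1(f_\O,\O)\geq \frac c2\mathscr A(f_\O,0)^2-C(T(\O^\sharp)-T(\O))$. Proposition \ref{Pr:FA} gives $\mathscr D_1(f_\O,\O)\geq c'(T(\O^\sharp)-T(\O))$, and a convex combination of the two lower bounds yields the claim.

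To estimate $I_2$, we use that $h:=w_\O-P$ is harmonic in $\O$ and equals $-P$ on $\partial\O$. With $g:=u_{f_\O,\O}$, we integrate by parts, rigorously via the nonnegative measure $\Delta w_\O+\mathds 1_\O$ (respectively $\Delta g+f_\O$) concentrated on $\partial\O$, which avoids any regularity assumption on $\O$. This gives
\[
I_2=\int_\O(-\Delta g)\,h=\int_{\partial\O}|\partial_\nu g|\,\frac{|x|^2-1}{2d}.
\]
By the maximum principle $0\leq g\leq \Vert f^\sharp\Vert_{L^\infty}w_\O$, so the boundary flux of $g$ is dominated by $\Vert f^\sharp\Vert_{L^\infty}$ times that of $w_\O$ wherever the weight $|x|^2-1$ is positive, that is, on $\partial\O\setminus\B(0;1)$. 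The same computation with $f\equiv1$ gives the identity
\[
\int_{\partial\O}|\partial_\nu w_\O|\frac{|x|^2-1}{2d}=T(\O)-T(\O^\sharp)+\frac1{2d}\Big(\int_\O|x|^2-\int_{\O^\sharp}|x|^2\Big),
\]
which links the outer flux to the torsion deficit plus a Hardy--Littlewood deficit for $\mathds 1_\O$.

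The main obstacle is that the naive bound obtained this way is only linear in the distance of $\O$ to the ball, through $\int_\O|x|^2-\int_{\O^\sharp}|x|^2$, whereas \eqref{Eq:PlanI2} requires a quadratic bound. Two ingredients should close this gap. The first is the structure $f_\O=\phi(w_\O)$ from Lemma \ref{Le:OC}: the superlevel sets of $f_\O$ are superlevel sets of $w_\O$. Through the layer-cake decomposition used in Lemma \ref{Le:Replacement}, this lets us write $I_2$ as a superposition over levels $s$ of the analogous quantities for the sets $\{w_\O>s\}$, on which the first-order terms should cancel against those in $I_1$. The second ingredient is the normalisation $x_\O=0$, which removes the translation mode responsible for linear behaviour.

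If this direct cancellation cannot be made rigorous, the fallback is the Brasco--De Philippis--Velichkov scheme behind \eqref{Eq:QuantitativeSV}. We would argue by contradiction along a sequence $\O_k\subset\B(0;R)$ with vanishing ratio $\mathscr D_1(f_{\O_k},\O_k)/\mathscr A(f_{\O_k},0)^2$. Using Proposition \ref{Pr:FA}, we would reduce to $\alpha(\O_k)\to0$, and the selection principle would then replace $\O_k$ by nearly spherical sets for the shape functional $\O\mapsto\max_{\mathscr F(\O)}\Psi$. For these sets, a Fuglede-type second-order expansion should give $\mathscr A(f_\O,0)^2\lesssim\Vert\text{boundary perturbation}\Vert^2\lesssim T(\O^\sharp)-T(\O)$. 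Combining this with Proposition \ref{Pr:FA} again would conclude.
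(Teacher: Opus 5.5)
There is a genuine gap. Your splitting $\mathscr D_1(f_\O,\O)=I_1-I_2$, the bound $I_1\geq c\,\mathscr A(f_\O,0)^2$ from Proposition \ref{Pr:HL}, and the final combination with Proposition \ref{Pr:FA} are all correct. But the proposal stops exactly where the proof has to happen: the bound $I_2\leq\frac12 I_1+C(T(\O^\sh)-T(\O))$ is never established. Neither of your ``two ingredients'' comes with a mechanism. The fallback, a selection principle for $\O\mapsto\max_{\mathscr F(\O)}\Psi$, has no regularity theory behind it and is a research programme rather than a step.

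The boundary-flux route you start cannot close, and your diagnosis of why is off. Set $\nu_w:=\Delta w_\O+\mathds 1_\O$ and $\nu_g:=\Delta g+f_\O$. The one-sided domination $\nu_g\leq\Vert f^\sh\Vert_{L^\infty}\nu_w$ is true, but it is only usable where the weight $|x|^2-1$ is positive. Discarding $\partial\O\cap\B(0;1)$ leaves $\int_{\{|x|>1\}}\frac{|x|^2-1}{2d}\,d\nu_w$. For $\partial\O=\{(1+u(\theta))\theta\}$ this is $\approx d^{-2}\int_{\mathbb S^{d-1}}u_+$, which is first order, of the size of $|\O\triangle\O^\sh|$. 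This loss cannot be repaired, because $\nu_g$ has no lower bound by a multiple of $\nu_w$: indeed $g\equiv0$ on every component of $\O$ where $f_\O$ vanishes.

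By contrast, the term you blame is harmless: $\int_\O|x|^2-\int_{\O^\sh}|x|^2=\int_{\O\triangle\O^\sh}\bigl||x|^2-1\bigr|dx\lesssim\beta(\O):=\int_{\O\triangle\O^\sh}|1-|x||dx$ is second order. For $\O\subset\B(0;R)$ with $x_\O=0$ it is controlled \emph{linearly} by $T(\O^\sh)-T(\O)$. This follows from \cite[Theorem 4.3]{zbMATH06464861} when $\beta(\O)$ is small, from \cite[Lemma 5.2]{zbMATH06464861} when it is not, and from an elementary computation in $d=1$. This is where the barycentre normalisation enters, and you need this fact in any approach.

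The missing idea is to bound $I_1$, not $I_2$, by an interior quantity with a \emph{nonnegative} integrand, so that replacing $f_\O$ by $\Vert f^\sh\Vert_{L^\infty}$ loses nothing. Write $f_\O=\phi(w_\O)$ and $\Phi=\int_0^\cdot\phi$. The Pohozaev-type identities $\int_\O f_\O|\n w_\O|^2=\int_\O\Phi(w_\O)$ and $\int_\O f_\O\langle x,\n w_\O\rangle=-d\int_\O\Phi(w_\O)$ give
\[
\int_\O f_\O\bigl|\n(w_\O-P)\bigr|^2=\frac1{d^2}\int_\O f_\O|x|^2-\int_\O\Phi(w_\O).
\]
Talenti's pointwise bound $|\n w_\O^\sh|(x)\leq|x|/d$ then yields $d\int_\O\Phi(w_\O)=-\int_{\O^\sh}f^\sh\langle x,\n w_\O^\sh\rangle\leq\frac1d\int_{\O^\sh}f^\sh|x|^2$. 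Hence $I_1\leq\frac d2\int_\O f_\O|\n(w_\O-P)|^2\leq\frac d2\Vert f^\sh\Vert_{L^\infty}\int_\O|\n(w_\O-P)|^2$, and
\[
\int_\O|\n(w_\O-P)|^2=T(\O^\sh)-T(\O)+\frac1{d^2}\Bigl(\int_\O|x|^2-\int_{\O^\sh}|x|^2\Bigr)\lesssim T(\O^\sh)-T(\O)
\]
by the $\beta$-estimate above. This gives $I_1\lesssim T(\O^\sh)-T(\O)$ directly. That is stronger than your target, since $I_2\leq I_1$, and Proposition \ref{Pr:FA} concludes.
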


\begin{proof}[Proof of Proposition \ref{Pr:SV}]
From Proposition \ref{Pr:HL} there holds 
\[ \int_\O f_\O(x)|x|^2dx-\int_{\O^\sharp} f^\sharp(x)|x|^2dx\gtrsim \mathscr A(f_\O,0)^2.\] As we also have \eqref{Eq:LBSV} from \Cref{Pr:FA}, to establish the result, it suffices  to show that 
\begin{equation}\label{Eq:PCW}
\int_\O f_\O(x)|x|^2dx-\int_{\O^\sharp} f^\sharp(x)|x|^2dx\lesssim \int_{\O^\sharp} w_{\O^\sharp}-\int_\O w_\O.\end{equation} This estimate is now the main focus.  From Lemma \ref{Le:OC}, $f_\O$ can be written  $f_\O=\phi(w_\O)$ for some non-decreasing $\phi$. Let $\Phi:=\int_0^\cdot \phi$. On the one hand, we have 
\[ \int_\O f_\O |\n w_\O|^2=\int_\O \Phi(w_\O).\] On the other, 
\[ \int_\O f_\O \langle x,\n w_\O\rangle=-d\int_\O \Phi(w_\Omega).\] Consequently, we deduce a Feldman-type estimate \cite[Proposition 5]{zbMATH06898789}
\begin{align*}
\int_\O f_\O \left|\n w_\O+\frac{x}{d}\right|^2&=\int_\O f_\O |\n w_\O|^2+\frac2d\int_\O f_\O \langle x,\n w_\O\rangle+\frac1{d^2}\int_\O f_\O |x|^2
\\&=\frac1{d^2}\int_\O f_\O |x|^2-\int_\O \Phi(w_\O).
\end{align*} 
We then observe that the proof of \cite[Theorem 1]{zbMATH03531830} actually allows us to derive the pointwise bound
\begin{equation}\label{Eq:StgTalenti} |\n w_\O^\sharp|(x)\leq \frac{|x|}d=\abs{\na w_{\O^\sh}}(x).\end{equation}
Indeed, to get \eqref{Eq:StgTalenti}, proceed as in \cite{zbMATH03531830} and, fixing $t>0$, integrate the equation 
\[-\Delta w_\O=1\] on $\{w_\O>t\}$. Letting $\mu_{w_\O}$ denote the distribution function of $w_\O$, by the divergence theorem we deduce for a.e. $t$ that 
\[ \int_{\partial\{w_\O>t\}}|\n w_\O|\leq \mu_{w_\O}(t).\]Recall that 
\[ |\mu_{w_\O}'|(t)=\int_{\partial\{w_\O>t\}}\frac1{|\n w_\O|}. \]By the Cauchy-Schwarz and isoperimetric inequalities we obtain the differential inequality
\[ d^2\omega_d^{\frac{2}d}\mu_{w_\O}(t)^{2-\frac{2}{d}}\leq |\mu_{w_\O}'|(t)\mu_{w_\O}(t). 
\]In other words,
\[ d^2\omega_d^{2/d}\mu_{w_\O}(t)^{1-2/d}\leq|\mu_{w_\O}'|(t).\] Differentiating the relation  $w_\O^\sharp\left(\left(\frac{\mu_{w_\O}(s)}{\omega_d}\right)^{\frac1d}\right)=s$ yields the conclusion.

Coming back to the Feldman type estimate above, we deduce
\[ d \int_\O \Phi(w_\O)=d\int_{\O^\sharp}\Phi(w_{\O}^\sharp)=-\int_{\O^\sharp} \phi(w_\O^\sharp)\langle x,\n w_{\O}^\sharp\rangle\leq \frac1d\int_{\O^\sharp}|x|^2 f^\sharp(x)dx .\] Thus, using the fact that $\Phi(w_\O)^\sh=\Phi(w_\O^\sh)$ as $\Phi$ is non-decreasing
\[d^2\int_\O f_\O \left|\n w_\O+\frac{x}d\right|^2\geq \int_{\O} |x|^2 f_\O(x)dx-\int_{\O^\sharp} |x|^2f^\sharp(x)dx. \]
We can now expand the left-hand side to deduce, for a constant $C$ that might change from line to line but only depends on $\Vert f^\sh\Vert_{L^\infty(\O^\sh)},\, d$ and $|\O^\sh|$
\begin{align*} \int_\O f_\O   \left|\n w_\O+\frac{x}d\right|^2
&\leq \Vert f^\sharp\Vert_\infty\left(\int_\O |\n w_\O|^2+\frac1{d^2}\int_\O |x|^2+\frac2d\int_\O  \langle x,\n w_\O\rangle\right)
\\&\leq C \left(\int_\O w_\O-2\int_\O w_\O +\frac1{d^2}\int_\O |x|^2dx\right)
\\&\leq C \left(-\int_\O w_\O + \frac1{d^2}\int_\O |x|^2 dx\right).\end{align*} As 
\[ \frac1{d^2} \int_{\O^\sharp}|x|^2dx=\int_{\O^\sharp} |\n w_{\O^\sharp}|^2=\int_{\O^\sh}w_{\O^\sh}\] by the explicit expression of the torsion function, this implies 
\[ \int_\O f_\O   \left|\n w_\O+\frac{x}d\right|^2\lesssim \left(\int_{\O^\sharp}w_{\O^\sharp}-\int_\O w_\O+\int_\O |x|^2dx-\int_{\O^\sharp}|x|^2dx\right).\] We finally write 
\[ \int_{\O}|x|^2dx-\int_{\O^\sharp}|x|^2dx=\int_{\O\triangle \O^\sharp} \left||x|^2-1\right|dx\lesssim \int_{\O\triangle \O^\sharp} (1+\sup_\O|\cdot|)|1-|x||dx.\] 

Let us now introduce
\[ \beta(\O)=\int_{\O\triangle \O^\sh} |1-|x||dx\] and prove that 
\begin{equation}\label{Eq:BT} \beta(\O)\leq C \left(T(\O^\sharp)-T(\O)\right)\end{equation} for any $\O\subset \B(0;R)$ for some constant $C$ that only depends on $R$.  We need to distinguish between the dimensions:
\begin{enumerate}
     \item 
In dimension 1, this follows by Lemma \ref{Le:SV1d}, which establishes that for some constant $c>0$ there holds
\[ c\alpha(\O)\leq T(\O^\sh)-T(\O).\] We now claim that 
\[ \beta(\O)\leq |\O| R\alpha(\O). \] Indeed, assuming for notational simplicity that $|\O^\sharp|=2$, we have
\[ \beta(\O)=\int_\O |x|dx-1.\]Let $z\in \R$ and consider the interval $(z-1;z+1)$ and, by symmetry, assume that $z\geq 0$. As $x_\O=0$, 
\[ \int_\O |x|dx-1=2\int_{\O\cap (-\infty;0)}(-x)dx-1,\]whence 
\begin{align*}
\beta(\O)&=2\int_{\O\cap (-\infty;0)}(-x)dx-1
\\&\leq 2\int_{(-1;0)}(-x)dx+2R|\O\cap (-\infty;-1)|-1
\\&=2R|\O\cap (-\infty;-1)|
\\&\leq 2R |\O\setminus (z-1;z+1)|.
\end{align*}
Taking the infimum in $z$ gives the conclusion. 
\item If $d\geq 2$
, by \cite[Theorem 4.3]{zbMATH06464861}, there exists $\beta'>0$ such that, if  
\[ \beta(\O):=\int_{\O\triangle \O^\sharp} \left|1-|x|\right|dx\leq \beta'\] then
\[ \int_{\O\triangle\O^\sharp}|1-|x||dx\lesssim \int_{\O^\sharp}w_{\O^\sharp}-\int_\O w_\O\] for some constant that only depends on $R$. On the other hand, if $\beta(\O)\geq \beta'$, we know from \cite[Lemma 5.2]{zbMATH06464861} that
\[ T(\O^\sharp)-T(\O)\gtrsim \alpha(\O)^4\gtrsim \beta(\O)^4\gtrsim \beta'^3\beta(\O).\] The constant depends on $R$. 
\end{enumerate}At any rate, we deduce, using both cases, that 
\[ \beta(\O)\lesssim T(\O^\sharp)-T(\O)\] for any $\O\subset \B(0;R)$, so that \eqref{Eq:BT} is established.

This concludes the proof, as this yields
\[  \int_{\O} |x|^2 f_\O(x)dx-\int_{\O^\sharp} |x|^2f^\sharp(x)dx\lesssim \int_\O f_\O \left|\n w_\O+\frac{x}d\right|^2\lesssim \int_{\O^\sharp} w_{\O^\sharp}-\int_{\O}w_\O.\] The conclusion follows.
\end{proof}

We are now in a position to conclude in the case of uniformly bounded domains.
\begin{proposition}\label{Pr:ProofR}
Let $R>0$ and assume that $\O\subset \B(0;R)$. Assume $x_\O=\frac1{|\O|}\int_\O xdx=0$. There exists a constant $c>0$ that only depends on $d$, $|\O|$, $R$, $\Vert f^\sharp\Vert_{L^1(\O^\sharp)}$ and $\Vert f^\sharp\Vert_{L^\infty(\O^\sharp)}$ such that
\[ \mathscr D_1(f,\O)\geq c\mathscr A(f,0)^2.\]\end{proposition}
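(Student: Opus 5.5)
We combine Proposition \ref{Pr:SV}, which handles the maximiser $f_\O$ of $\Psi$, with Lemma \ref{Le:Replacement}, which transfers estimates between two elements of $\mathscr F(\O)$. Throughout, $f\in\mathscr F(\O)$ is arbitrary and $\O\subset\B(0;R)$ has barycenter $x_\O=0$.

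\textbf{Step 1: the deficit of $f$ dominates that of $f_\O$.} By definition of $f_\O$ as the maximiser of $\Psi$ on $\mathscr F(\O)$, and by Lemma \ref{Le:D1Torsion},
\[ \mathscr D_1(f,\O)=\int_{\O^\sharp}f^\sharp w_{\O^\sharp}-\Psi(f)\geq \int_{\O^\sharp}f^\sharp w_{\O^\sharp}-\Psi(f_\O)=\mathscr D_1(f_\O,\O).\]

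\textbf{Step 2: transfer to $f$.} Lemma \ref{Le:Replacement} applied to the pair $(f,f_\O)$, together with Step 1, gives
\[ 2\mathscr D_1(f,\O)\geq \mathscr D_1(f,\O)+\mathscr D_1(f_\O,\O)\geq c\int_0^\infty \mu_f(t)^{\frac2d-1}\left|\{f>t\}\triangle\{f_\O>t\}\right|^2dt.\]
Since $f_\O^\sharp=f^\sharp$, the distribution functions agree, $\mu_f=\mu_{f_\O}$, so the weight is the same whichever of the two functions we use. Proposition \ref{Pr:SV} gives
\[ \mathscr D_1(f,\O)\geq \mathscr D_1(f_\O,\O)\geq c\int_0^\infty\mu_f(t)^{\frac2d-1}\left|\{f_\O>t\}\triangle\{f^\sharp>t\}\right|^2dt.\]

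\textbf{Step 3: triangle inequality.} For each $t$, the triangle inequality for the measure of symmetric differences gives
\[ |\{f>t\}\triangle\{f^\sharp>t\}|\leq |\{f>t\}\triangle\{f_\O>t\}|+|\{f_\O>t\}\triangle\{f^\sharp>t\}|.\]
Using $(a+b)^2\leq 2a^2+2b^2$ and integrating against the common weight $\mu_f(t)^{\frac2d-1}$, we get
\[ \mathscr A(f,0)^2\leq 2\int_0^\infty \mu_f^{\frac2d-1}|\{f>t\}\triangle\{f_\O>t\}|^2dt+2\,\mathscr A(f_\O,0)^2\lesssim \mathscr D_1(f,\O).\]
This is the claim.

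All constants come from Lemma \ref{Le:Replacement} and Proposition \ref{Pr:SV}, so they depend only on $d$, $R$, $|\O|$, $\Vert f^\sharp\Vert_{L^1}$ and $\Vert f^\sharp\Vert_{L^\infty}$.

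The only delicate point is Step 2: one must check that the weight in Lemma \ref{Le:Replacement} is indeed shared by $f$ and $f_\O$, which is exactly equimeasurability. Beyond that the argument is bookkeeping once Proposition \ref{Pr:SV} is available. The barycenter assumption enters only through Proposition \ref{Pr:SV}.
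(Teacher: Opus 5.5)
Your proof is correct and follows essentially the same route as the paper. The triangle inequality splits $\mathscr A(f,0)^2$ into a transfer term, controlled by Lemma \ref{Le:Replacement}, and $\mathscr A(f_\O,0)^2$, controlled by Proposition \ref{Pr:SV}; maximality of $f_\O$ then gives $\mathscr D_1(f_\O,\O)\le\mathscr D_1(f,\O)$. You also note explicitly that $\mu_f=\mu_{f_\O}$, so the weights coincide, which the paper leaves implicit.
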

\begin{proof}[Proof of Proposition \ref{Pr:ProofR}] By the triangle inequality, for some constant that only depends on $d$, $|\O|$, $\Vert f^\sharp\Vert_{L^1(\O^\sharp)}$ and $\Vert f^\sharp\Vert_{L^\infty(\O^\sharp)}$, 
\[ \mathscr A(f,0)^2\leq c\left( \int_0^\infty \mu_f(t)^{\frac2d-1}\left|\{f>t\}\triangle\{f_\O>t\}\right|^2dt+\mathscr A(f_\O,0)^2\right).\]
By Proposition \ref{Pr:SV}, we deduce that, up to replacing $c$ with another constant,
\[ \mathscr A(f,0)^2\leq c\left(  \int_0^\infty \mu_f(t)^{\frac2d-1}\left|\{f>t\}\triangle\{f_\O>t\}\right|^2dt+\mathscr D_1(f_\O,\O)\right).\] However, by definition of $f_\O$, 
\[\mathscr D_1(f_\O,\O)\leq \mathscr D_1(f,\O).\] Thus, Lemma \ref{Le:Replacement} gives
\[ \mathscr A(f,0)^2\leq C \mathscr D_1(f,\O).\] The conclusion follows. 
\end{proof}

\subsection{Reduction to uniformly bounded domains}
Throughout, we fix $\O$ open and such that $|\O|=|\B(0;1)|.$
\subsubsection{The case $d\geq 2$}
We now assume that $d\geq 2$.
We recall the following result from \cite[Proof of Lemma 5.3, (5.15)--(5.16)]{zbMATH06464861}:
\begin{lemma}\label{Le:BDPV}
There exist $R=R(d),\, \delta=\delta(d)>0$ and a constant $C$ only depending on $d$  such that, if  $T(\O^\sharp)-T(\O)\leq \delta$, then there exists an open  $U\subset \O$ with $|U|\geq \frac{|\O|}2$, $\mathrm{diam}(U)\leq R$ and that satisfies 
\[
|\O\setminus U|\leq C(T(\O^\sharp)-T(\O)),\, T(\O)-T(U)\leq C(T(\O^\sharp)-T(\O)).\] \end{lemma}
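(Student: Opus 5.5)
The statement is quoted from \cite[Proof of Lemma 5.3, (5.15)--(5.16)]{zbMATH06464861}. To reprove it, I would run the concentration-compactness type argument of Brasco, De Philippis \& Velichkov, made quantitative through the torsion deficit. Write $\delta_T:=T(\O^\sharp)-T(\O)$ and use the variational characterisation
\[ T(\O)=\max_{v\in W^{1,2}_0(\O)}\Big(2\int_\O v-\int_\O|\n v|^2\Big). \]
The first step is to show that when $\delta_T$ is small, most of the torsion, and hence most of the mass of $w_\O$, sits in a bounded region. Since $\delta_T$ small forces $\alpha(\O)$ small by \eqref{Eq:QuantitativeSV}, there is a ball $\B(x_0;1)$ with $|\O\triangle\B(x_0;1)|$ small. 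The difficulty is that this alone gives no control of the diameter: thin tentacles or far-away small components are allowed. The real content of the lemma is therefore to cut those pieces away at a cost that is \emph{linear} in $\delta_T$.

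The second step is the cut-off. For $r\in(1,2)$, set $\O_r:=\O\cap\B(x_0;r)$ and $m(r):=|\O\setminus\B(x_0;r)|$. I would compare $T(\O)$ with $T(\O_r)+T(\O\setminus\overline{\B(x_0;r)})$, plus an error from the interface, using $w_\O$ truncated by cut-off functions. The key subadditivity input is this: for a piece of small volume $m$, Saint-Venant gives $T\lesssim m^{1+2/d}$. Meanwhile, $T$ of a ball of volume $|\O|-m$ exceeds what the remaining part can carry by a gain of order $m$, namely $T(\O^\sharp)-T((\O^\sharp)_{|\O|-m})\sim m$. This yields an inequality of the form
\[ m(r)\lesssim \delta_T+\int_{\O\cap\partial\B(x_0;r)}w_\O\,|\n w_\O|+\dots \]
The interface terms I would control by a De Giorgi / co-area argument on $r\mapsto m(r)$, namely $-m'(r)=\mathcal H^{d-1}(\O\cap\partial\B(x_0;r))$. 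This is combined with the $L^\infty$ bound $\|w_\O\|_\infty\lesssim|\O|^{2/d}$ and with the fact that $w_\O$ is nearly radial, since $\int|\n w_\O+\frac{x-x_0}d|^2$ is small, as in the Feldman-type computation of Proposition \ref{Pr:SV}. I would then use the isoperimetric inequality on $\O\setminus\B(x_0;r)$ to obtain a differential inequality of the type $m(r)^{1-1/d}\lesssim -m'(r)+\delta_T/\text{(small)}$. Its ODE integration shows that $m$ vanishes before some $R=R(d)$ unless the right-hand side is $\gtrsim\delta_T$, in which case we pick the corresponding radius instead.

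The third step is to choose $U:=\O\cap\B(x_0;r_*)$ for a suitable $r_*\le R/2$, which gives $\mathrm{diam}(U)\le R$ and $|U|\ge|\O|/2$ automatically once $\delta\le\delta(d)$. The bounds $|\O\setminus U|\le C\delta_T$ and $T(\O)-T(U)\le C\delta_T$ then follow from the estimate of the second step, together with the upper bound $T(\O)\le T(U)+T(\O\setminus U)+\text{interface}$ and $T(\O\setminus U)\lesssim|\O\setminus U|^{1+2/d}$.

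The main obstacle is the linear dependence on $\delta_T$ in the second step. Interface terms are naturally only sublinear, so a careful choice of cutting radius is needed to absorb them, and this is exactly where I would follow the BDPV mechanism rather than a naive truncation. Since the lemma is an imported result, in the paper I would simply cite it.
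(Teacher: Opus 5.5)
The paper gives no argument for this lemma. It quotes it from the proof of Lemma~5.3 in \cite{zbMATH06464861}, so your decision to cite it is exactly the paper's route. As a reproof, however, your sketch has a genuine gap at the step you yourself flag as the main obstacle.

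The first half is sound. For $\eta$ Lipschitz with $0\le\eta\le1$ and $\eta=0$ on $\partial\B(x_0;r)$, the competitor $w_\Omega\eta$ gives the exact bound
\[ T(\Omega)\le T(\Omega\cap\B(x_0;r))+T(\Omega\setminus\overline{\B(x_0;r)})+I(r),\qquad I(r):=\int w_\Omega(1-\eta)^2+\int w_\Omega^2|\nabla\eta|^2. \]
Saint-Venant on both pieces then yields $m(r)\lesssim\delta_T+I(r)$ whenever $m(r)\le|\Omega|/2$. (A sharp cut is not an admissible competitor, so the interface term is $I(r)$, not a boundary integral of $w_\Omega|\nabla w_\Omega|$.)

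A uniform $R$ requires $I(r)$ to be \emph{superlinear} in $m$, and none of your ingredients gives this.
\begin{itemize}
\item With only $\|w_\Omega\|_\infty\lesssim1$ and a layer of width $\e$, you get $I(r)\lesssim\e^{-2}\bigl(m(r-\e)-m(r+\e)\bigr)$. This is a linear recursion, and iterating it reaches $m\lesssim\delta_T$ only after radii of order $\log(1/\delta_T)$, which is not uniform.
\item The isoperimetric inequality cannot give $m^{1-1/d}\lesssim-m'$. The perimeter of $\Omega\setminus\B(x_0;r)$ contains $\partial\Omega\setminus\overline{\B(x_0;r)}$, which is uncontrolled (possibly infinite) for an arbitrary open set of finite measure; think of a dust of tiny balls.
\item The Feldman-type near-radiality is unavailable. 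For $f\equiv1$ it reads $\int_\Omega|\nabla w_\Omega+\frac{x-x_0}d|^2=\delta_T+\frac1{d^2}\bigl(\int_\Omega|x-x_0|^2-\int_{\B(x_0;1)}|x-x_0|^2\bigr)$, and far-away pieces destroy the second-moment term. For example, $\Omega=\B(0;\rho)\cup\B(y;s)$ with $\rho^d+s^d=1$ and $|y|=L$ has deficit of order $s^d$, while the left-hand side is $\gtrsim s^dL^2$. The paper uses this identity only for $\Omega\subset\B(0;R)$, i.e.\ after the present lemma, so it cannot be invoked to prove it.
\end{itemize}

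The missing ingredient is \emph{local} smallness of $w_\Omega$ where $\Omega$ is thin. Extended by zero, $w_\Omega$ satisfies $-\Delta w_\Omega\le1$ in $\R^d$, so $w_\Omega+\frac{|\cdot-x|^2}{2d}$ is subharmonic and
\[ w_\Omega(x)\le\|w_\Omega\|_\infty\frac{|\Omega\cap\B(x;\rho)|}{|\B(x;\rho)|}+\rho^2. \]
Cut at radius $r+2\e$ with transition layer $\{r+\e<|x-x_0|<r+3\e\}$, and take $\rho=\e:=m(r)^{1/(d+2)}$. Each $\B(x;\e)$ with $x$ in the layer lies outside $\B(x_0;r)$, so $w_\Omega\lesssim m(r)^{2/(d+2)}$ on the layer. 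Hence $I(r+2\e)\lesssim m(r)^{1+\frac2{d+2}}$ and
\[ m(r+2\e)\le C\delta_T+Cm(r)^{1+\frac2{d+2}}. \]
Start at $r_0=1$, where $m(1)\le|\Omega|\alpha(\Omega)$ is small by \eqref{Eq:QuantitativeSV} (any non-sharp version suffices), and set $r_{k+1}=r_k+2m(r_k)^{1/(d+2)}$. Then $m(r_k)$ at least halves at each step, so $r_k\le2$, until the first $k$ with $m(r_k)^{1+2/(d+2)}\le\delta_T$. For that $k$, $U:=\Omega\cap\B(x_0;r_{k+1})$ meets all requirements with $R=4$:
\[ |\Omega\setminus U|=m(r_{k+1})\lesssim\delta_T,\qquad T(\Omega)-T(U)\le c\,m(r_{k+1})^{1+2/d}+I(r_{k+1})\lesssim\delta_T. \]
This De Giorgi-type recursion replaces both the isoperimetric step and the ODE integration in your sketch.
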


\begin{proof}[Proof of Theorem \ref{Th:Main} when $d\geq 2$] We have already noticed (Lemma \ref{Le:Torsion}) that it suffices to prove the theorem for $j:x\mapsto x$. To conclude the proof, we  argue by contradiction and assume that there exists a sequence $\{\O_k\}_{k\in \N}$ and $\{f_k\}_{k\in \N}$ such that for any $k\in \N$ $f_k^\sharp=f_{ref}$ and $\O_k^\sharp=\O^\sharp=\B(0;1)$, and such that 
\begin{equation}\label{Eq:HypContrad} \frac{\mathscr D_1(f_k,\O_k)}{\inf_{z\in \R^d}\mathscr A(f_k,z)^2}\underset{k\to \infty}\rightarrow 0.\end{equation} From Proposition \ref{Pr:FA} it follows that 
\begin{equation}\label{Eq:Boat}T(\O^\sharp)-T(\O_k)\underset{k\to \infty}\rightarrow 0.\end{equation} Let $R,\, \delta$ be given by Lemma \ref{Le:BDPV}.  If $\O_k\subset \B(0;R)$ for any $k$, it suffices to apply Propositions \ref{Pr:FA}--\ref{Pr:ProofR} to conclude. Else, let, for any $k$, $U_k$ be given by Lemma \ref{Le:BDPV}, so that  
\[ |\O_k\setminus U_k|\leq c_d(T(\O^\sharp)-T(\O_k)),\, T(\O_k)-T(U_k)\leq c_d (T(\O^\sharp)-T(\O_k)).\] 
We now define 
\[ \tilde f_k:=f_k\mathds 1_{U_k}.\]
Let us show the following three estimates:
 first, there exists $c>0$ such that for any $k$ large enough
 \begin{equation}\label{Eq:NormComparison}
 c\Vert f_k\Vert_{L^1(\O_k)}\leq \Vert \tilde f_k\Vert_{L^1(U_k)},\,  c\Vert f_k\Vert_{L^\infty(\O_k)}\leq \Vert \tilde f_k\Vert_{L^\infty(U_k)},
 \end{equation}
 second, \begin{equation}\label{Eq:LastEstimate}
\mathscr D_1(\tilde f_k,U_k)\leq \mathscr D_1(f_k,\O_k)+\Vert f^\sharp\Vert_{L^\infty(\O^\sharp)}(T(\O_k)-T(U_k))
\end{equation}
and, third, for any $z\in \R^d$,
\begin{equation}\label{Eq:LastEstimate2} \mathscr A(f_k,z)^2\leq C \left(\mathscr A(\tilde f_k,z)^2+ T(\O^\sharp)-T(\O_k)\right).\end{equation}
Assuming these three inequalities are established, the conclusion of the proof goes as follows: we first apply  Proposition \ref{Pr:ProofR}; the constants given in that proposition only depend on $\Vert \tilde f_k\Vert_{L^1(U_k)}$ and $\Vert \tilde f_k\Vert_{L^\infty(U_k)}$, so that from \eqref{Eq:NormComparison} there exists a constant $C$ that only depends on $\Vert f_k\Vert_{L^\infty(\O_k)}$ and $\Vert f_k\Vert_{L^1(\O_k)}$ such that, for any $k\in \N$, letting $z_k:=\frac1{|U_k|}\int_{U_k}xdx$
\[ \mathscr A(\tilde f_k,z_k)^2\leq C \mathscr D_1(\tilde f_k,U_k).\]
From \eqref{Eq:LastEstimate} this gives
\[\mathscr A(\tilde f_k,z_k)^2\leq C \left(\mathscr D_1(f_k,\O_k)+T(\O_k)-T(U_k)\right)\] so that, since 
$T(\O_k)-T(U_k)\leq c_d(T(\O^\sharp)-T(\O_k)),$
\[\mathscr A(\tilde f_k,z_k)^2\leq C \left(\mathscr D_1(f_k,\O_k)+T(\O^\sharp)-T(\O_k)\right).\] Consequently, as we already know from Proposition \ref{Pr:FA} that \[ T(\O^\sharp)-T(\O_k)\leq C \mathscr D_1(f_k,\O_k),\] we obtain 
\[ \mathscr A(\tilde f_k,z_k)^2\leq C \mathscr D_1(f_k,\O_k).\] Using \eqref{Eq:LastEstimate2}, we finally deduce 
\[  \mathscr A(f_k,z_k)^2\leq C \left(\mathscr A(\tilde f_k,z_k)^2+ T(\O^\sharp)-T(\O_k)\right)\leq C\mathscr D_1(f_k,\O_k),\] in contradiction with \eqref{Eq:HypContrad}. This concludes the proof.

Let us now give the proofs of the three estimates \eqref{Eq:NormComparison}--\eqref{Eq:LastEstimate}--\eqref{Eq:LastEstimate2}. We begin with \eqref{Eq:NormComparison} with the $L^1$ norm. Observe that from 
\eqref{Eq:Boat} and the estimate 
$|\O_k\setminus U_k|\leq c_d(T(\O^\sharp)-T(\O_k))$ we obtain 
\[ 0\leq \Vert f_k\Vert_{L^1(\O_k)}-\Vert \tilde f_k\Vert_{L^1(U_k)}\leq \Vert f_k\Vert_{L^\infty}|\O_k\setminus U_k|\leq c_d\Vert f_k\Vert_{L^\infty(\O_k)}(T(\O^\sharp)-T(\O_k))\underset{k\to \infty}\rightarrow 0.\] In particular, for any $k$ large enough so that 
\[ c_d\Vert f_k\Vert_{L^\infty(\O_k)}(T(\O^\sharp)-T(\O_k))\leq \frac12\Vert f_k\Vert_{L^1(\O_k)}=\frac12\Vert f^\sharp\Vert_{L^1(\O^\sharp)}\] there holds 
\[ \Vert f_k\Vert_{L^1(\O_k)}\leq \Vert \tilde f_k\Vert_{L^1(U_k)}+\frac12\Vert f_k\Vert_{L^1(\O_k)},\] whence the conclusion. As regards the $L^\infty$ norm, observe that 
\[ \left|\left\{\tilde f_k\geq \frac{\Vert f_k\Vert_{L^\infty(\O_k)}}2\right\}\right|= \left|\left\{\tilde f_k\geq \frac{\Vert f^\sharp\Vert_{L^\infty(\O^\sharp)}}2\right\}\right|\geq \mu_{f^\sharp}\left(\frac{\Vert f^\sharp\Vert_{L^\infty(\O^\sharp)}}2\right)-|\O_k\setminus U_k|.\] For $k$ large enough we thus deduce from $|\O_k\setminus U_k|\underset{k\to \infty}\rightarrow 0$ that  $\left|\left\{\tilde f_k\geq \frac{\Vert f_k\Vert_{L^\infty(\O_k)}}2\right\}\right|>0$ and so 
\[ \Vert \tilde f_k\Vert_{L^\infty(U_k)}\geq  \frac{\Vert f_k\Vert_{L^\infty(\O_k)}}2.\] This concludes the proof of \eqref{Eq:NormComparison}.

Regarding \eqref{Eq:LastEstimate} we observe that as $U_k\subset \O_k$, $w_{U_k}\leq w_{\O_k}$, $U_k^\sharp\subset \O^\sharp$, $w_{U_k^\sharp}\leq w_{\O^\sharp}$. In particular, 
\begin{align*}
\mathscr D_1(\tilde f_k,U_k)&=\int_{U_k^\sharp}\tilde f_k^\sharp w_{U_k^\sharp}-\int_{U_k}f_k w_{U_k}
\\&\leq \int_{\O^\sharp} f^\sharp w_{\O^\sharp}-\int_{\O_k}f_k w_{\O_k}+\int_{\O_k}f_k w_{\O_k}-\int_{\O_k} f_k\mathds 1_{U_k}w_{U_k}
\\&\leq \mathscr D_1(f_k,\O_k)+\int_{\O_k} f_k\left(w_{\O_k}-w_{U_k}\right) \text{ where we extend $w_{U_k}$ by zero outside of $U_k$}
\\&\leq  \mathscr D_1(f_k,\O_k)+\Vert f^\sharp\Vert_{L^\infty(\O^\sharp)} \cdot \int_{\O_k}(w_{\O_k}-w_{U_k})
\\&=\mathscr D_1(f_k,\O_k)+\Vert f^\sharp\Vert_{L^\infty(\O^\sharp)}\cdot\left(T(\O_k)-T(U_k)\right).
\end{align*}
This proves \eqref{Eq:LastEstimate}.

It remains to prove \eqref{Eq:LastEstimate2}.  It suffices to establish
\begin{equation}\label{Eq:LE3}
\mathscr A(f_k,z)^2\leq C \left(\mathscr A(\tilde f_k,z)^2+\Vert f_k-\tilde f_k\Vert_{L^1(\O_k)}\right)
\end{equation}as the estimate $\Vert f_k-\tilde f_k\Vert_{L^1(\O_k)}\leq \Vert f^\sharp\Vert_{L^\infty(\O^\sharp)}\cdot |\O_k\setminus U_k|\leq c_d \Vert f^\sharp\Vert_{L^\infty(\O^\sharp)}\left(T(\O^\sharp)-T(\O_k)\right)$ then allows us to conclude. To prove \eqref{Eq:LE3} fix $t\geq 0$ and 
\[ E:=\{f_k> t\},\, F:=\{\tilde f_k> t\}\subset E.\] Letting $z\in \R^d$ be fixed and $r_E$ (resp. $r_F$) be such that 
\[ |\B(z;r_E)|=|E|,\text{ resp. }|\B(z;r_F)|=|F|\] we obtain 
\[ |E\triangle \B(z;r_E)|\leq |F\triangle \B(z;r_F)|+|\B(z;r_E)\triangle \B(z;r_F)|+|E\triangle F|\leq |F\triangle \B(z;r_F)|+2(|E|-|F|).\]
As $d\geq 2$, $\frac2d-1\leq 0$, whence $|E|^{\frac2d-1}\leq |F|^{\frac2d-1}$. We deduce that for some constant $C$
\begin{align*} |E|^{\frac2d-1}|E\triangle \B(z;r_E)|^2&\leq C|E|^{\frac2d-1}\left(|F\triangle \B(z;r_F)|^2+(|E|-|F|)^2\right)
\\&\leq C |F|^{\frac2d-1}|F\triangle \B(z;r_F)|^2
\\&+C|E|^{\frac2d}(|E|-|F|)
\\&\leq C |F|^{\frac2d-1}|F\triangle \B(z;r_F)|^2+C|\O^\sharp|^{\frac2d}\left(|E|-|F|\right).\end{align*} Integrating this estimate in $t$ gives, for some constant depending on $|\O^\sharp|$ only, 
\[ \mathscr A(f_k,z)^2\leq C \left(\mathscr A(\tilde f_k,z)^2+\Vert f_k-\tilde f_k\Vert_{L^1(\O_k)}\right),\] whence the conclusion.
\end{proof}

\subsubsection{The case $d=1$}
We cannot use \cite[Proof of Lemma 5.3, (5.15)--(5.16)]{zbMATH06464861} directly when $d=1$; nevertheless, we have the following result:
\begin{lemma}\label{Le:BDPV1d}
Assume $d=1$ and let $\O$ be an open set with finite measure. Write $\O=\cup_{j\in \N}I_j$ where each $I_j$ is an open interval, $I_j\cap I_\ell=\emptyset$ whenever $j\neq \ell$. Up to reordering, assume that $|I_0|=\max_{j\in \N}|I_j|$ and set $U:=I_0$. Then, for some constant $C>0$ that only depends on $|\O|$,  
there holds
\[
|\O\setminus U|\leq C(T(\O^\sharp)-T(\O)),\, T(\O)-T(U)\leq C(T(\O^\sharp)-T(\O)).\]
\end{lemma}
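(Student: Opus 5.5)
The plan is to reuse the explicit one-dimensional computation from the proof of Lemma \ref{Le:SV1d}. In dimension one the torsion function of an interval $(a;b)$ is $\frac{(x-a)(b-x)}2$. Hence $T((a;b))=c|b-a|^3$ with $c=\frac1{12}$, and the torsion is additive over disjoint components: $T(\O)=c\sum_{j}|I_j|^3$. Both quantities to be estimated are then explicit sums over the intervals, and everything reduces to elementary inequalities on the sequence $(|I_j|)_j$ with $\sum_j|I_j|=|\O|=|\O^\sharp|$.

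\textbf{First estimate.} I would repeat the chain already used in Lemma \ref{Le:SV1d}. Since $|I_j|\leq |I_0|$ for every $j$,
\[ T(\O^\sharp)-T(\O)=c\Big(|\O|^3-\sum_j|I_j|^3\Big)\geq c\Big(|\O|^3-|I_0|^2|\O|\Big)=c|\O|\big(|\O|-|I_0|\big)\big(|\O|+|I_0|\big).\]
Bounding $|\O|+|I_0|\geq|\O|$ gives $T(\O^\sharp)-T(\O)\geq c|\O|^2\,|\O\setminus U|$. This yields the first inequality with $C=(c|\O|^2)^{-1}$.

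\textbf{Second estimate.} Again by additivity and $|I_j|\leq|I_0|\leq|\O|$,
\[ T(\O)-T(U)=c\sum_{j\geq1}|I_j|^3\leq c|I_0|^2\sum_{j\geq 1}|I_j|\leq c|\O|^2\,|\O\setminus U|.\]
Combining this with the first estimate gives $T(\O)-T(U)\leq C'(T(\O^\sharp)-T(\O))$ with $C'$ depending only on $|\O|$; in fact $C'=1$ works. The maximum defining $I_0$ exists because $|I_j|\to0$, so the reordering is legitimate.

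\textbf{Main obstacle.} There is no real obstacle here. The only points needing care are that the sums are countable, which is handled by monotone convergence, and that the constants depend only on $|\O|$. No diameter control is claimed for $U$, and none is needed: $U$ is a single interval, so $\mathrm{diam}(U)=|U|\leq|\O|$ automatically. This is the one-dimensional substitute for the bound $\mathrm{diam}(U)\leq R$ in Lemma \ref{Le:BDPV}. Note also that $|U|\geq|\O|/2$ need not hold in general, but it does hold once $T(\O^\sharp)-T(\O)$ is small, by the first estimate.
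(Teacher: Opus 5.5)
Your proposal is correct and follows essentially the same route as the paper. Both use the explicit formula $T((a;b))=c|b-a|^3$ together with additivity over components. The first estimate comes from the bound $\sum_j|I_j|^3\leq|I_0|^2\sum_j|I_j|$, and the second from the same bound on $\sum_{j\geq1}|I_j|^3$ combined with the first estimate; your remark that the constant $C'=1$ suffices in the second inequality is a harmless sharpening of the paper's constant.
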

Setting the proof of this lemma aside for the moment, we can conclude the proof of Theorem \ref{Th:Main}.
\begin{proof}[Proof of Theorem \ref{Th:Main} when  $d=1$] We argue by contradiction, taking a sequence $\{\O_k\}_{k\in \N}$ contradicting the conclusion of the theorem, taking $U_k$ given by Lemma \ref{Le:BDPV1d} and setting $\tilde f_k:=f_k\mathds 1_{U_k}$. Note that from the estimates of Lemma \ref{Le:BDPV1d}, $|U_k|\underset{k\to \infty}\rightarrow |\O^\sharp|$ and thus, for $k$ large enough, $|U_k|\in \left(\frac{|\O_k|}2;|\O_k|\right].$
The conclusion of the proof of the theorem is identical to the case $d\geq 2$; the only point that differs is the proof of Estimate \eqref{Eq:LastEstimate2}. To prove that this estimate still holds in the $d=1$ case, take $t\geq 0$ and 
\[ E:=\{f_k\geq t\},\, F:=\{\tilde f_k\geq t\}\subset E.\] Letting $z\in \R$ be fixed and $r_E$ (resp. $r_F$) be such that 
\[ |\B(z;r_E)|=|E|,\text{ resp. }|\B(z;r_F)|=|F|\] we have 
\[ |E\triangle \B(z;r_E)|\leq|F\triangle \B(z;r_F)|+2(|E|-|F|).\] In particular, up to a constant $c$ that might change from line to line
\begin{align*}
|E|\cdot |E\triangle \B(z;r_E)|^2&\leq c\left( |F|\cdot |F\triangle \B(z;r_F)|^2+(|E|-|F|)|F\triangle \B(z;r_F)|^2+ |E|\cdot (|E|-|F|)^2\right)
\\&\leq c\left(|F|\cdot |F\triangle \B(z;r_F)|^2+|\O^\sharp |^2\cdot( |E|-|F|)\right).
\end{align*}
Integrating this inequality in $t$ yields, for some constant $c$ that only depends on $|\O_k|$, the estimate \eqref{Eq:LastEstimate2}.

\end{proof}
It thus remains to prove Lemma \ref{Le:BDPV1d}:
\begin{proof}[Proof of Lemma \ref{Le:BDPV1d}]
The proof is essentially the same as that of Lemma \ref{Le:SV1d}. Recall that in the proof of Lemma \ref{Le:SV1d} we established 
\begin{align*} T(\O^\sharp)-T(\O) 
&\geq  c |\O^\sharp|^2\left(|\O^\sharp|-|U|\right)\end{align*} whence 
\[ T(\O^\sharp)-T(\O)\geq c|\O\setminus U|\]for some constant $c$ only depending on $\O^\sharp$.

Second, for some constant $c$ that might change from line to line,
\begin{align*}
T(\O)-T(U)&=c \left(\sum_{j\in \N,\, j\neq 0}|I_j|^3\right)
\\&\leq c |U|^2\sum_{j\neq 0}|I_j|
\\&=c|U|^2(|\O|-|U|)
\\&\leq c|\O|^2\left(|\O|-|U|\right)
\\&\leq c |\O|\cdot(T(\O^\sharp)-T(\O))
\end{align*}where we used the previous inequality in the last step.

\end{proof}

\section{Proof of further results}
\subsection{Proof of Corollary~\ref{cor:pStability}}
Let $p\in [1;+\infty)$. It follows from Theorem \ref{Th:Main} applied to $j:x\mapsto |x|^p$ that 
\[ \Vert u_{f^\sharp,\O^\sharp}\Vert_{L^p(\O^\sharp)}^p-\Vert u_{f,\O}\Vert_{L^p(\O)}^p\geq c\left(\alpha(\O)^{\beta(d)}+\inf_{z\in \R^d}\mathscr A(f,z)^2\right)\] where the exponent $\beta$ is defined as
\[ \beta(1)=1,\, \beta(d)=2\text{ if }d\geq 2.\]
From the convexity inequality 
\[ 1-x^p\leq p(1-x),\, x\geq 0\] we deduce that
\[\Vert u_{f^\sharp,\O^\sharp}\Vert_{L^p(\O^\sharp)}^p\left(1-\left(\frac{\Vert{u_{f,\O}}\Vert_{L^p(\O)}}{\Vert u_{f^\sharp,\O^\sharp}\Vert_{L^p(\O^\sharp)}}\right)^p\right)\geq  c\left(\alpha(\O)^{\beta(d)}+\inf_{z\in \R^d}\mathscr A(f,z)^2\right).
\] This gives the conclusion.
In the case $p=\infty$, we observe that, since 
\[ u_{f^\sharp,\O^\sharp}-u_{f,\O}^\sharp\] is symmetric decreasing, 
\[ \Vert u_{f^\sharp,\O^\sharp}\Vert_{L^\infty(\O^\sharp)}-\Vert u_{f,\O}\Vert_{L^\infty(\O)}=\left(u_{f^\sharp,\O^\sharp}-u_{f,\O}^\sharp\right)(0)\geq \frac1{|\O|}\int_{\O^\sharp}\left(u_{f^\sharp,\O^\sharp}-u_{f,\O}^\sharp\right).\] It suffices to apply Theorem \ref{Th:Main} with $j:x\mapsto x$.

\subsection{Proof of Theorem \ref{Th:CE}}
\begin{enumerate}
\item\underline{Quadratic $L^1$ stability in dimension $d\geq 2$:}
The proof of \eqref{Eq:L1Stability} follows from Theorem \ref{Th:Main} and the following lemma: \begin{lemma}\label{Le:Comparison}
Let $d\geq 2$ and $z\in \R^d$. Then, for some constant $C$ that only depends on $f^\sharp$, 
\[\Vert f-f^\sharp(\cdot-z)\Vert_{L^1}^2\leq C\mathscr A(f,z)^2.\]
\end{lemma}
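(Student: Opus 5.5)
We will prove Lemma \ref{Le:Comparison} by reducing the $L^1$ distance to an integral of level-set discrepancies and then applying Cauchy--Schwarz with the weight $\mu_f^{\frac2d-1}$. The starting point is the layer cake representation. Since $f\geq 0$ and $f^\sharp(\cdot-z)\geq 0$ (both extended by zero), we have, for a.e. $x$,
\[
|f(x)-f^\sharp(x-z)|=\int_0^\infty \left|\mathds 1_{\{f>t\}}(x)-\mathds 1_{\{f^\sharp(\cdot-z)>t\}}(x)\right|dt .
\]
Integrating in $x$ and using Fubini gives
\[
\Vert f-f^\sharp(\cdot-z)\Vert_{L^1}=\int_0^{M}\left|\{f>t\}\triangle\{f^\sharp(\cdot-z)>t\}\right|dt,\qquad M:=\Vert f^\sharp\Vert_{L^\infty(\O^\sharp)},
\]
because both level sets are empty for $t\geq M$.

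Next we insert the weight and apply Cauchy--Schwarz:
\[
\Vert f-f^\sharp(\cdot-z)\Vert_{L^1}^2\leq \left(\int_0^M \mu_f(t)^{1-\frac2d}\,dt\right)\left(\int_0^M \mu_f(t)^{\frac2d-1}\left|\{f>t\}\triangle\{f^\sharp(\cdot-z)>t\}\right|^2dt\right).
\]
The second factor is exactly $\mathscr A(f,z)^2$. Here we use the convention that the integrand vanishes where $\mu_f(t)=0$, which is harmless because the symmetric difference is then empty.

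It remains to bound the first factor by a constant depending only on $f^\sharp$. Since $d\geq 2$, the exponent $1-\frac2d$ lies in $[0,1)$. Since $\mu_f=\mu_{f^\sharp}\leq |\O^\sharp|$, we get
\[
\int_0^M \mu_f(t)^{1-\frac2d}dt\leq M|\O^\sharp|^{1-\frac2d}.
\]
This constant depends only on $f^\sharp$ (through its sup norm and the volume of its support ball). This is exactly where $d\geq 2$ is used: for $d=1$ the exponent $1-\frac2d=-1$ is negative, and the integral of $\mu_f^{-1}$ near $t=M$ may diverge. That divergence is consistent with the weaker cubic stability in dimension $1$ stated in Theorem \ref{Th:CE}.

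We do not expect a real obstacle in this argument. The only points to check carefully are the measurability and Fubini step in the layer cake identity, and the treatment of the set where $\mu_f(t)=0$ in the Cauchy--Schwarz step. Combining the lemma with Theorem \ref{Th:Main}, and taking the infimum over $z$, then yields \eqref{Eq:L1Stability}.
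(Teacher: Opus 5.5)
Your proof is correct and is essentially the paper's argument. You use the layer cake identity, then Cauchy--Schwarz with the weight split as $\mu_f^{1-\frac2d}\cdot\mu_f^{\frac2d-1}$, and finally the bound $\int_0^M\mu_f(t)^{1-\frac2d}\,dt\leq M|\O^\sharp|^{1-\frac2d}$, which holds because $1-\frac2d\geq 0$ when $d\geq 2$.
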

\begin{proof}[Proof of Lemma \ref{Le:Comparison}]
We recall that from the layer cake  formula
\[ \Vert f-f^\sharp(\cdot-z)\Vert_{L^1}=\int_0^{\Vert f^\sh\Vert_{L^\infty(\O^\sharp)}} \left|\{f> t\}\triangle \{f^\sharp(\cdot-z)> t\}\right|dt.\] Thus, 
\begin{align*}
\Vert f-f^\sharp(\cdot-z)\Vert_{L^1}^2&\leq\int_0^{\Vert f^\sh\Vert_{L^\infty(\O^\sharp)}} \mu_f(t)^{1-\frac{2}d}dt \int_0^{\Vert f^\sh\Vert_{L^\infty(\O^\sharp)}} \mu_f(t)^{\frac2d-1}\left|\{f> t\}\triangle \{f^\sharp(\cdot-z)> t\}\right|^2dt
\\&\leq |\O|^{1-\frac{2}d}\Vert f^\sharp\Vert_{L^\infty}\mathscr A(f,z)^2.
\end{align*}
Here, we used the fact that $d\geq 2$ implies $\mu_f(t)^{1-\frac2d}\leq |\O^\sharp|^{1-\frac2d}$.
\end{proof} 
\item\underline{Sharp cubic $L^1$ stability in dimension $d=1$:}
To prove \eqref{Eq:Cubic}  it suffices to show that for any $z\in \R$
\begin{equation}\label{Eq:DTL} \mathscr A(f,z)^2\geq c \Vert f-f^\sharp(\cdot-z)\Vert_{L^1}^3\end{equation} for some constant $c$ that only depends on $\Vert f^\sharp\Vert_{L^\infty},\, \Vert f^\sharp\Vert_{L^1}.$ To prove \eqref{Eq:DTL}, let $z\in \R$ be fixed and define $\alpha(t):=|\{f>t\}\triangle \{f^\sharp(\cdot-z)> t\}|$. Thus
\begin{align*}
        \Vert f^\sharp-f(\cdot-z)\Vert_{L^1(\R)}=\int_0^{\Vert f^\sh\Vert_{L^\infty(\O^\sharp)}} \alpha
        &\leq C_d \int_0^{\Vert f^\sh\Vert_{L^\infty(\O^\sharp)}} \alpha(t)^{\frac23} \mu_f(t)^{\frac13}dt\text{ as, for some dimensional constant $C_d$, }\alpha(t)\leq C_d\mu_f(t)
        \\&=C_d\int_0^{\Vert f^\sh\Vert_{L^\infty(\O^\sharp)}} \alpha(t)^{\frac23} \mu_f(t)^{\frac{2-d}{3d}}\mu_f(t)^{\frac{2(d-1)}{3d}}dt
        \\&\leq C_d \left(\int_0^{\Vert f^\sh\Vert_{L^\infty(\O^\sharp)}} \alpha(t)^{2}\mu_f(t)^{\frac{2-d}d}dt\right)^{\frac13}\cdot\left(\int_0^{\Vert f^\sh\Vert_{L^\infty(\O^\sharp)}} \mu_f(t)^{\frac{d-1}d}dt \right)^{\frac{2}{3}}
        \\&\text{ from the $L^3-L^{\frac32}$ H\"{o}lder inequality}
        \\&\leq C(f^\sharp) \left(\int_0^{\Vert f^\sh\Vert_{L^\infty(\O^\sharp)}} \alpha(t)^{2}\mu_f(t)^{\frac{2-d}d}dt\right)^{\frac13},
    \end{align*}
    whence the conclusion. 

Let us prove that the exponent 3 is optimal by working in $\O=\O^\sharp=(-1;1)$. Let, for $\gamma \in (0,1)$, 
\begin{equation}\label{Eq:fg} f_\gamma:x\mapsto 1-|x|^\gamma.\end{equation} We define, for any $\delta >0$ small enough, the function \begin{equation}\label{Eq:fgd} f_{\gamma,\delta}:x\mapsto \begin{cases}
f_\gamma(x+\delta)\text{ if }x\in [0;\delta),
\\ f_\gamma(x-\delta) \text{ if }x\in [\delta;2\delta),
\\ f_\gamma(x)\text{ else.}
\end{cases}\end{equation} We extend $f_{\gamma}$ and $f_{\gamma,\delta}$ by zero outside of $(-1;1)$. We claim that
\begin{equation}\label{Eq:fgamma}
\inf_{z\in \R}\Vert f_{\gamma,\delta}-f_{\gamma}(\cdot-z)\Vert_{L^1}=\Vert f_{\gamma,\delta}-f_\gamma\Vert_{L^1}=C\delta^{1+\gamma}.
\end{equation}
If the minimality of $z=0$ is established, observe that
\begin{align*}
\Vert f_{\gamma,\delta}-f_\gamma\Vert_{L^1}&=2\int_0^\delta ((x+\delta)^\gamma-x^\gamma)dx
=2\delta^{1+\gamma}\int_0^1\left((1+y)^\gamma-y^\gamma\right)dy
= C\delta^{1+\gamma}.
\end{align*} 
To prove that $z=0$ is a global optimiser, we use once more the layer cake  formula. Let, for any $t\in (0;1)$, $r(t):=(1-t)^{\frac1\gamma}$ so that 
\[ E_{r,\delta}:=\{f_{\gamma,\delta}>t\}, I_r(0)=(-r(t);r(t)),\, I_r(z):=\{f_\gamma(\cdot-z)>t\}=(z-r;z+r).\] 
Let $t_\delta:=1-(2\delta)^\gamma$. If $t\in (0;t_\delta)$, 
\[ E_{r(t),\delta}=(-r(t);r(t))\] and so 
\[ \left|E_{r(t),\delta}\triangle I_{r(t)}(z)\right|=2\min \left(|z|,2r(t)\right).\] As $t\in (0;t_\delta)$, $r(t)\geq 2\delta$ and so 
\begin{equation}\label{Eq:I1}
\left|E_{r(t),\delta}\triangle I_{r(t)}(z)\right|\geq 2 \min \left(|z|,4\delta\right).
\end{equation}
If $t\in (t_\delta;1)$, we can use the triangle inequality to obtain 
\[\left|E_{r(t),\delta}\triangle I_{r(t)}(z)\right|-\left|E_{r(t),\delta}\triangle I_{r(t)}(0)\right|\geq -\left|I_{r(t)}(z)\triangle I_{r(t)}(0)\right|=-2\min\left(|z|,2r(t)\right).\] As $r(t)\leq 2\delta$, this yields 
\begin{equation}\label{Eq:I2}\left|E_{r(t),\delta}\triangle I_{r(t)}(z)\right|-\left|E_{r(t),\delta}\triangle I_{r(t)}(0)\right|\geq -2\min\left(|z|,4\delta\right).\end{equation} Summing  \eqref{Eq:I1}--\eqref{Eq:I2} gives 
\[ \int_0^1 \left(\left|E_{r(t),\delta}\triangle I_{r(t)}(z)\right|-\left|E_{r(t),\delta}\triangle I_{r(t)}(0)\right|\right)dt\geq 2\min (|z|,4\delta)\left(2t_\delta-1\right).\] As $2 t_\delta-1>0$ if $\delta$ is small enough, the conclusion follows.
Now, let $p\in [1,+\infty)$ be fixed. Consider the function 
\[ J:f\mapsto \frac1p\int_{-1}^1 u_{f}^p\] where, since $\O=\O^\sharp$ is fixed, we dropped the subscript and used $u_{f,\O}=u_f$. The map $J$ is G\^ateaux differentiable by elliptic estimates, and one obtains, using standard methods, that, for any $h\in L^\infty((-1;1))$, for any $f\in L^\infty((-1;1))$, the derivative
\[ J'(f)[h]:=\lim_{t\to 0^+}\frac{J(f+th)-J(f)}t\] is given by 
\[ J'(f)[h]=\int_{-1}^1 \omega_f h\text{ with }\begin{cases}-\Delta \omega_f=u_f^{p-1}&\text{ in }(-1;1),\, \\ \omega_f(\pm1)=0.\end{cases}\] Now, by convexity of $J$, we know that 
\[ J(f_{\gamma,\delta})-J(f_\gamma)\geq J'(f_\gamma)[f_{\gamma,\delta}-f_\gamma]=\int_{-1}^1 \omega_{f_\gamma}(f_{\gamma,\delta}-f_\gamma).\]
Equivalently, 
\begin{equation}\label{Eq:Mouais}J(f_\gamma)-J(f_{\gamma,\delta})\leq \int_{-1}^1 (f_\gamma-f_{\gamma,\delta})\omega_{f_\gamma}.\end{equation}
Note that $\omega_{f_\gamma}$ is radially symmetric decreasing. By elliptic regularity, $\omega_{f_\gamma}\in \mathscr C^{2,\eta}((-1;1))$ for any $\eta\in(0;1)$ and $-\frac12\omega_{f_\gamma}''(0)=c_0>0$.  In particular, we may approximate, for $\delta>0$ small enough, 
 \[ \omega_{f_\gamma}(x)-\omega_{f_\gamma}(0)=-c_0|x|^2+\underset{x\to 0}O(|x|^{2+\eta}).\]
Now, note that by equimeasurability
\begin{align*}
\int_{-1}^1 \omega_{f_\gamma}(f_\gamma-f_{\gamma,\delta})&=\int_{-1}^1 (\omega_{f_\gamma}-\omega_{f_\gamma}(0))(f_\gamma-f_{\gamma,\delta}).
\end{align*}
However, on the one hand, we have
\begin{align*}
\int_0^1 x^2 (f_\gamma-f_{\gamma,\delta}) dx&=\int_0^\delta x^2((x+\delta)^\gamma-x^\gamma)dx+\int_{\delta}^{2\delta} x^2((x-\delta)^\gamma-x^\gamma)dx
\\&=\delta^{3+\gamma}\int_0^1 y^2((1+y)^\gamma-y^\gamma)dy+\delta^{3+\gamma}\int_0^1 (1+y)^2(y^\gamma-(1+y)^\gamma)dy
\\&=-\delta^{3+\gamma}\int_0^1 (1+2y)\left((1+y)^\gamma-y^\gamma\right)dy
=-C' \delta^{3+\gamma}
\end{align*}
for some $C'>0$. On the other hand, by the same changes of variables,
\begin{align*}
\int_0^1 |x|^{2+\eta}\cdot|f_{\gamma,\delta}-f_\gamma|(x)dx&=\underset{\delta\to 0}O\left(\delta^{3+\eta+\gamma}\right).
\end{align*}
In particular,
\[ \Vert u_{f_{\gamma}}\Vert_{L^p((-1,1))}^p-\Vert u_{f_{\gamma,\delta}}\Vert_{L^p((-1,1))}^p=\underset{\delta\to 0^+}O\left( \delta^{3+\gamma}\right).\]Now,  let $\e>0$ and  $\gamma\in (0;1)$ be such that $\frac{3+\gamma}{1+\gamma}\in (3-\e;3)$. We obtain for some constant $C$ that does not depend on $\delta$
    \[\frac{\Vert u_{f_{\gamma}}\Vert_{L^p((-1,1))}^p-\Vert u_{f_{\gamma,\delta}}\Vert_{L^p((-1,1))}^p}{\Vert f_{\gamma,\delta}-f_\gamma\Vert_{L^1((-1,1))}^{3-\e}}\leq  C \delta^{(3+\gamma)-(3-\e)(1+\gamma)}\underset{\delta\to 0^+}\to0, \] and the conclusion follows.


\item\underline{No quadratic $L^1$ stability in dimension $d\in \{2,3\}$ if $f$ is not bounded: } Let $d=2$ or 3 and let, for any $\gamma>0$, 
 \[ g_\gamma:x\mapsto \frac1{| x|^\gamma}.\] In order to ensure that $g_\gamma\in L^2(\B(0;1))$, we require
 \[ \int_0^1 r^{d-2\gamma-1}dr<+\infty,\] or, equivalently, 
 \[\gamma<\frac{d}2.\]
 We define, for small $\delta>0$, the competitor:
 \[ g_{\gamma,\delta}:x\mapsto \begin{cases}
 (|x|^d+\delta^d)^{-\frac{\gamma}d}\text{ if } |x|\in(0;\delta),
 \\ (|x|^d-\delta^d)^{-\frac{\gamma}d}\text{ if }|x|\in (\delta;2^{\frac1d}\delta)\\ g_\gamma(x)\text{ else.}
 \end{cases}\] $g_{\gamma,\delta}$ and $g_\gamma$ are equimeasurable by construction. Notice now the following (the computations  are similar to the ones for $f_\gamma$ and $f_{\gamma,\delta}$ see \eqref{Eq:fg}-\eqref{Eq:fgd}): 
 \begin{align*}
        \inf_{z\in \R^d}\Vert g_{\gamma,\delta}-g_\gamma(\cdot+z)\Vert_{L^1}&=\int_{\B(0;1)}|g_{\gamma,\delta}-g_\gamma|\\&=d\omega_d\Bigg[
\int_0^\delta r^{d-1}
\left(r^{-\gamma}-(r^d+\delta^d)^{-\gamma/d}\right)\,dr\\
+&\int_\delta^{2^{1/d}\delta}r^{d-1}
\left((r^d-\delta^d)^{-\gamma/d}-r^{-\gamma}\right)\,dr
\Bigg]
        \\&=C_{\gamma}\cdot \delta^{d-\gamma}
 \end{align*} for some constant $C_\gamma$ independent of $\delta$. Similarly, for any $\beta>\gamma-d$, we have 
 \begin{equation}\label{Eq:Roo}
     \int_{\B(0;1)}|x|^\beta(g_\gamma(x)-g_{\gamma,\delta}(x))dx=C_{\beta,\gamma}\cdot\delta^{d-\gamma+\beta}
 \end{equation}
 and 
  \begin{equation}\label{Eq:Roo2}
     \int_{\B(0;1)}|x|^\beta|g_\gamma(x)-g_{\gamma,\delta}(x)|dx=C_{\beta,\gamma}'\cdot\delta^{d-\gamma+\beta}
 \end{equation}
 Now, let $p$ be fixed and set 
 \[ J(f):=\frac1p\int_{\B(0;1)}u_f^p.\] By the same convexity arguments that yielded \eqref{Eq:Mouais} we obtain 
 \[ J(g_\gamma)-J(g_{\gamma,\delta})\leq \int_{\B(0;1)} \omega_{g_\gamma} \left(g_{\gamma}-g_{\gamma,\delta}\right)\] where
 \[\begin{cases}
     -\Delta \omega_{g_\gamma}=u_{g_\gamma}^{p-1}&\text{ in }\B(0;1),\, 
     \\ \omega_{g_\gamma}=0&\text{ on }\partial \B(0;1).
 \end{cases}\]Note that $\omega_{g_\gamma}$ is radially symmetric. Furthermore, as $g_\gamma\in L^2$, if $d<4$, $u_{g_\gamma}\in \mathscr C^{0,\eta}$ for some $\eta>0$ by the Sobolev embedding $W^{2,2}\hookrightarrow \mathscr C^{0,\eta}$. This entails $\omega_{g_\gamma}\in \mathscr C^{2,\eta}(\B(0;1))$ for some $\eta>0$. In particular, we may approximate, for $\delta>0$ small enough, 
 \[ \omega_{g_\gamma}(x)-\omega_{g_\gamma}(0)=-c_0|x|^2+\underset{x\to 0}O(|x|^{2+\eta})\] where $c_0>0$ as $0$ is a non-degenerate maximiser of $\omega_{g_\gamma}$. By equimeasurability, we deduce that 
 \[\int_{\B(0;1)} \omega_{{g_\gamma}}\cdot(g_{\gamma,\delta}-g_\gamma)=-c_0\int_{\B(0;1)}|x|^2(g_{\gamma,\delta}-g_\gamma)+\int_{\B(0;1)}\underset{x\to 0}O(|x|^{2+\eta})(g_{\gamma,\delta}-g_\gamma)\underset{\delta\to 0^+}\sim -C \delta^{d+2-\gamma} \]where we used \eqref{Eq:Roo}. 
 
 In particular, 
 \[ \frac{\Vert u_{g_\gamma}\Vert_{L^p(\B(0;1))}^p-\Vert u_{g_{\gamma,\delta}}\Vert_{L^p(\B(0;1))}^p}{\inf_{z\in \R^d}\Vert g_{\gamma,\delta}-g_\gamma(\cdot+z)\Vert_{L^1(\R^d)}^2}=\underset{\delta\to 0^+}O\left(\delta^{2+\gamma-d}\right)\] for some $C>0$. Taking $\gamma\in \left(d-2;\frac{d}2\right)$, which is always possible for $d\in \{2,3\}$, we reach the conclusion.
\end{enumerate}
\printbibliography

@misc{arXiv:2609.07772,
 author = {Andr{\'e} Guerra and Jo{\~a}o Miguel Machado and Jo{\~a}o P. G. Ramos},
 title = {On the effective sharp stability of the {Faber}-{Krahn} inequality},
 year = {2026},
 howpublished = {Preprint, {arXiv}:2609.07772 [math.{AP}] (2026)},
 url = {https://arxiv.org/abs/2609.07772},
 arXiv = {arXiv:2609.07772}
}

@misc{zbMATH03073200,
 author = {Hardy, G. H. and Littlewood, J. E. and P{\'o}lya, George},
 title = {Inequalities. 2nd ed.},
 year = {1952},
 language = {English},
 howpublished = {Cambridge, {Engl}.: {At} the {University} {Press}. {XII}, 324 p. (1952).},
 zbMATH = {3073200},
 Zbl = {0047.05302}
}

@article{K21,
	author = {Kim, Daesung},
	doi = {10.1307/mmj/1593136867},
	fjournal = {Michigan Mathematical Journal},
	issn = {0026-2285},
	journal = {Mich. Math. J.},
	language = {English},
	number = {3},
	pages = {615--634},
	title = {Quantitative inequalities for the expected lifetime of {Brownian} motion},
	volume = {70},
	year = {2021},
	zbl = {1481.60162},
	zbmath = {7418473}}

@article{Lemou,
	author = {Lemou, Mohammed},
	journal = {Communications in Mathematical Physics},
	pages = {695--727},
	publisher = {Springer},
	title = {Extended rearrangement inequalities and applications to some quantitative stability results},
	volume = {348},
	year = {2016}}

@article{Burton1989,
	author = {Burton, G. R.},
	issn = {1873-1430},
	journal = {Annales de l'I.H.P. Analyse non lin{\'e}aire},
	language = {en},
	number = {4},
	pages = {295--319},
	series = {Annales de l'{I}.{H}.{P}. {Analyse} non lin{\'e}aire},
	title = {Variational problems on classes of rearrangements and multiple configurations for steady vortices},
	url = {http://www.numdam.org/item/?id=AIHPC_1989__6_4_295_0},
	urldate = {2025-01-28},
	volume = {6},
	year = {1989}}

@article{zbMATH07439881,
	author = {Mazari, Idriss},
	doi = {10.1016/j.na.2021.112649},
	fjournal = {Nonlinear Analysis. Theory, Methods \& Applications. Series A: Theory and Methods},
	issn = {0362-546X},
	journal = {Nonlinear Anal., Theory Methods Appl., Ser. A, Theory Methods},
	language = {English},
	note = {Id/No 112649},
	pages = {48},
	title = {Quantitative estimates for parabolic optimal control problems under {{\(L^\infty\)}} and {{\(L^1\)}} constraints in the ball: quantifying parabolic isoperimetric inequalities},
	volume = {215},
	year = {2022},
	zbl = {1478.49003},
	zbmath = {7439881}}

@article{zbMATH05578735,
	author = {Fusco, Nicola and Maggi, Francesco and Pratelli, Aldo},
	doi = {10.4007/annals.2008.168.941},
	fjournal = {Annals of Mathematics. Second Series},
	issn = {0003-486X},
	journal = {Ann. Math. (2)},
	language = {English},
	number = {3},
	pages = {941--980},
	title = {The sharp quantitative isoperimetric inequality},
	url = {annals.math.princeton.edu/annals/2008/168-3/p06.xhtml},
	volume = {168},
	year = {2008},
	zbl = {1187.52009},
	zbmath = {5578735}}

@article{zbMATH03962241,
	author = {Alvino, A. and Lions, Pierre-Louis and Trombetti, G.},
	doi = {10.1017/S0308210500014475},
	fjournal = {Proceedings of the Royal Society of Edinburgh. Section A. Mathematics},
	issn = {0308-2105},
	journal = {Proc. R. Soc. Edinb., Sect. A, Math.},
	language = {English},
	pages = {37--48},
	title = {A remark on comparison results via symmetrization},
	volume = {102},
	year = {1986},
	zbl = {0597.35005},
	zbmath = {3962241}}

@article{zbMATH03531830,
	author = {Talenti, Giorgio},
	fjournal = {Annali della Scuola Normale Superiore di Pisa. Classe di Scienze. Serie IV},
	issn = {0391-173X},
	journal = {Ann. Sc. Norm. Super. Pisa, Cl. Sci., IV. Ser.},
	language = {English},
	pages = {697--718},
	title = {Elliptic equations and rearrangements},
	url = {https://eudml.org/doc/83735},
	volume = {3},
	year = {1976},
	zbl = {0341.35031},
	zbmath = {3531830}}

@book{zbMATH07035366,
	author = {Baernstein, Albert II and Drasin, David and Laugesen, Richard S.},
	doi = {10.1017/9781139020244},
	fseries = {New Mathematical Monographs},
	isbn = {978-0-521-83047-8; 978-1-139-02024-4},
	language = {English},
	publisher = {Cambridge: Cambridge University Press},
	series = {New Math. Monogr.},
	title = {Symmetrization in analysis. {With} a foreword by {Walter} {Hayman}},
	volume = {36},
	year = {2019},
	zbl = {1509.32001},
	zbmath = {7035366}}

@article{zbMATH08185219,
	author = {Acampora, P. and Lamboley, J.},
	doi = {10.1007/s00205-026-02179-3},
	fjournal = {Archive for Rational Mechanics and Analysis},
	issn = {0003-9527},
	journal = {Arch. Ration. Mech. Anal.},
	language = {English},
	note = {Id/No 24},
	number = {3},
	pages = {54},
	title = {Sharp quantitative {Talenti} inequality in particular cases},
	volume = {250},
	year = {2026},
	zbl = {1587.35114},
	zbmath = {8185219}}

@article{zbMATH06898789,
	author = {Feldman, William M.},
	doi = {10.1137/17M1143009},
	fjournal = {SIAM Journal on Mathematical Analysis},
	issn = {0036-1410},
	journal = {SIAM J. Math. Anal.},
	language = {English},
	number = {3},
	pages = {3303--3326},
	title = {Stability of {Serrin}'s problem and dynamic stability of a model for contact angle motion},
	volume = {50},
	year = {2018},
	zbl = {1394.35295},
	zbmath = {6898789}}

@article{zbMATH06464861,
	author = {Brasco, Lorenzo and De Philippis, Guido and Velichkov, Bozhidar},
	doi = {10.1215/00127094-3120167},
	fjournal = {Duke Mathematical Journal},
	issn = {0012-7094},
	journal = {Duke Math. J.},
	language = {English},
	number = {9},
	pages = {1777--1831},
	title = {Faber-{Krahn} inequalities in sharp quantitative form},
	volume = {164},
	year = {2015},
	zbl = {1334.49149},
	zbmath = {6464861}}

@article{arXiv:2311.18617,
  title = {The Talenti comparison result in a quantitative form},
  ISSN = {0391-173X},
  url = {http://dx.doi.org/10.2422/2036-2145.202404_010},
  DOI = {10.2422/2036-2145.202404_010},
  journal = {Annali Scuola Normale Superiore-- Classe di Scienze},
  publisher = {Scuola Normale Superiore - Edizioni della Normale},
  author = {Amato,  Vincenzo and Barbato,  Rosa and Masiello,  Alba Lia and Paoli,  Gloria},
  year = {2024},
  month = Dec,
  pages = {31}
}

@article{CianFer08,
	author = {Cianchi, Andrea and Ferone, Adele},
	doi = {10.1112/jlms/jdm116},
	fjournal = {Journal of the London Mathematical Society. Second Series},
	issn = {0024-6107},
	journal = {J. Lond. Math. Soc., II. Ser.},
	language = {English},
	number = {3},
	pages = {581--592},
	title = {A strengthened version of the {Hardy}-{Littlewood} inequality},
	volume = {77},
	year = {2008},
	zbl = {1155.26017},
	zbmath = {5286376}}
\end{document}